\DeclarePairedDelimiter\floor{\lfloor}{\rfloor}
\DeclarePairedDelimiter{\card}{|}{|}
\theoremstyle{plain}
\newtheorem{thm}{Theorem}[section]
\newtheorem{prop}[thm]{Proposition}
\newtheorem{clm}[thm]{Claim}
\newtheorem{cor}[thm]{Corollary}
\newtheorem{lem}[thm]{Lemma}
\newtheorem*{clm*}{Claim}
\theoremstyle{definition}
\newtheorem{dfn}[thm]{Definition}
\newtheorem{exmp}[thm]{Example}
\newtheorem{obs}[thm]{Observation}
\numberwithin{equation}{section}
\def\COMMENT#1{}
\let\COMMENT=\footnote
\let\polishlcross=\l
\def\l{\ifmmode\ell\else\polishlcross\fi}
\let\eps=\varepsilon
\let\theta=\vartheta
\let\phi=\varphi
\newcommand{\mc}[1]{\mathcal{#1}}
\def\cH{{\mathcal H}}
\def\cF{{\mathcal F}}
\def\cB{{\mathcal B}}
\def\cG{{\mathcal G}}
\def\cK{{\mathcal K}}
\def\cW{{\mathcal W}}
\def\aut{\mathrm{aut}}
\DeclareMathAlphabet\bfc{OMS}{cmsy}{b}{n}
\DeclareMathAlphabet{\pzc}{OT1}{pzc}{m}{it}
\title{Vertex Ramsey properties of randomly perturbed graphs}
\author{Shagnik Das}
\author{Patrick Morris}
\author{Andrew Treglown}
\thanks{
SD: Freie Universit\"at Berlin, Germany, {\tt shagnik@mi.fu-berlin.de}.  Research supported by GIF grant \\ \indent G-1347-304.6/2016 and by the Deutsche Forschungsgemeinschaft (DFG) project 415310276. \\
\indent
PM: Freie Universit\"at Berlin, Germany and Berlin Mathematical School, Germany, {\tt pm0041@mi.fu-berlin.de}. \\ \indent Research supported by a Leverhulme Trust Study Abroad
  Studentship (SAS-2017-052$\backslash$9).\\
\indent AT: University of Birmingham, United Kingdom, {\tt a.c.treglown@bham.ac.uk}.}
\date{\today}
\begin{document}

\maketitle
\begin{abstract}
Given graphs $F,H$ and $G$, we say that 
$G$ is $(F,H)_v$-Ramsey if every
red/blue vertex colouring of $G$ contains
a red copy of $F$ or a blue copy of $H$.
Results of \L uczak, Ruci\'nski and Voigt,
and Kreuter determine the threshold
for the property that the random graph 
$G(n,p)$ is $(F,H)_v$-Ramsey.
In this paper we consider the sister problem in the setting of \emph{randomly perturbed graphs}.
In particular, we determine how many random
edges one needs to add to a dense graph to
ensure that with high probability the resulting
graph is $(F,H)_v$-Ramsey for all pairs $(F,H)$
that involve at least one clique.
\end{abstract}

\section{Introduction}
For $\ell \in \mathbb{N}$, a sequence of (not necessarily distinct) graphs $H_1,\ldots, H_\ell$, 
and a graph $G$, we say that $G$ is \emph{$(H_1,\dots,
H_ \ell)_v$-Ramsey} if for every $\ell$-colouring of the \emph{vertices} of $G$, there is some $i \in [\ell]$ for which
$G$ contains a copy of $H_i$ whose vertices are all coloured in the $i$th colour.
Similarly, 
we say $G$ is \emph{$(H_1,\dots, H_\ell)$-Ramsey} if for every $\ell$-colouring of the \emph{edges} of $G$, 
there is some $i \in [\ell]$ for which
$G$ contains a copy of $H_i$ whose edges are all coloured in the $i$th colour.
In the case when $\ell=2$ we take the convention that the colours used are red and blue.
If $H_1=\dots=H_\ell=H$ we write e.g.\ $(H_1,\dots,
H_\ell)_v$-Ramsey as $(H,\ell)_v$-Ramsey.

The classical question in Ramsey theory is to establish the smallest $n \in \mathbb N$
such that  the complete 
graph $K_n$ on $n$ vertices is $(H_1,\dots,H_\ell)$-Ramsey. In general, by Ramsey's theorem such an $n$ is known to always exist, but relatively few such \emph{Ramsey numbers} are known precisely.
In contrast to this, the analogous question  in the setting of vertex colourings is completely trivial.
Indeed, the pigeonhole principle implies $K_n$ is $(H_1,\dots,H_\ell)_v$-Ramsey if $n=v(H_1)+\dots +v(H_\ell)-\ell+1$, while it is not $(H_1,\dots,H_\ell)_v$-Ramsey
if $n$ is any smaller.

\subsection{Vertex Ramsey properties of random graphs}
On the other hand, vertex Ramsey questions are natural in the setting where the host graph is sparser. 
In particular, in the early 1990s \L uczak, Ruci\'nski and Voigt~\cite{luczak1992ramsey} investigated 
vertex Ramsey properties of random graphs. Recall that the random graph $G(n, p)$ has vertex set
$[n] := \{1, \dots , n\}$ and each edge is present with probability $p$, independently of all other choices.
To state their result we must introduce the notion of the $1$-density of a graph.
\begin{dfn}[1-density] \label{def:onedensity}
For a graph $H$, the \emph{$1$-density} of $H$ is defined to be 
\[m_1(H):=\max\left\{\frac{e_J}{v_J-1}: J\subseteq H, v_J\geq 2 \right\},\]
where here $e_J$ and $v_J$ denotes the number of edges and vertices in $J$ respectively.
\end{dfn}
The following result shows that the $1$-density of $H$ is the parameter that governs the threshold for 
the property that $G(n,p)$ is $(H,\ell)_v$-Ramsey. Recall that an event occurs in $G(n,p)$ with high probability (w.h.p.) if its probability tends to $1$ as $n\rightarrow \infty$. 
\begin{thm}[\L uczak, Ruci\'nski and Voigt~\cite{luczak1992ramsey}] \label{thm:onegraphthreshold}
Let $\ell \geq 2$ and let $H$ be a graph with at least one edge and that is not a matching if $\ell =2$.
Then there exist constants $c,C>0$, such that if $p\geq C n^{-1/m_1(H)}$, then w.h.p.\
$G(n,p)$ is  $(H,\ell)_v$-Ramsey and if $p\leq c n^{-1/m_1(H)}$, then w.h.p.\ $G(n,p)$ is not $(H,\ell)_v$-Ramsey.
\end{thm}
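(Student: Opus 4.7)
The plan is to prove the two halves of the threshold separately, as is standard for such random-graph threshold results.

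For the $1$-statement, I would establish the stronger assertion that when $p \geq Cn^{-1/m_1(H)}$, w.h.p.\ \emph{every} vertex subset $S$ of $G(n,p)$ with $|S| \geq n/r$ induces a copy of $H$; the Ramsey conclusion then follows by pigeonhole applied to the largest colour class. For a fixed such $S$, the expected number $\mu$ of copies of $H$ in $G(n,p)[S]$ is of order $C^{e_H}n^{v_H-e_H/m_1(H)}$, which is at least of order $C^{e_H}n$ because $m_1(H) \geq e_H/(v_H-1)$. Janson's inequality gives
\[
\mathbb{P}\bl G(n,p)[S] \text{ is } H\text{-free}\br \;\leq\; \exp(-\mu^2/2\Delta),
\]
where $\Delta$ sums over ordered pairs of $H$-copies in $S$ sharing at least one edge. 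A routine computation reduces $\mu^2/\Delta$ to a constant multiple of $\min_{F \subseteq H,\,e_F\geq 1} n^{v_F} p^{e_F}$, and the key point is that the exponent $v_F - e_F/m_1(H)$ is at least $1$ for every such $F$---this is exactly what $m_1(H)\geq e_F/(v_F-1)$ provides. Thus $\mu^2/\Delta = \Omega(Cn)$, and for $C$ sufficiently large the failure probability for a single $S$ is $o(2^{-n})$; a union bound over the at most $2^n$ choices of $S$ closes the $1$-statement.

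For the $0$-statement, the task is to exhibit a valid $r$-colouring of $V(G(n,p))$ w.h.p. Let $J \subseteq H$ attain $m_1(H) = e_J/(v_J-1)$. At $p = cn^{-1/m_1(H)}$, a first-moment computation gives that w.h.p.\ the numbers of copies of $J$ and of $H$ in $G(n,p)$ are both $O(c^{e_H}n)$, and moreover that w.h.p.\ every vertex lies in $O(c^{e_H})$ copies of $H$. Now take a uniformly random $r$-colouring: each copy of $H$ is monochromatic with probability $r^{1-v_H}$, so the expected number of monochromatic copies is at most a constant times $c^{e_H}n$, which can be made arbitrarily small by choice of $c$. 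I would then apply the Lov\'asz Local Lemma to the events ``copy $H'$ is monochromatic'', the dependency graph being controlled by the bound on copies of $H$ through a single vertex; the LLL condition $e\cdot r^{1-v_H}\cdot d \leq 1$ holds for $c$ small, yielding a valid colouring. An alternative route is a hands-on alteration: recolour one vertex in each offending copy, and verify that, thanks to the per-vertex bound, this repair does not create new monochromatic copies.

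The main obstacle I anticipate is making the $0$-statement tight, particularly when $r = 2$. For $r=2$ there is no spare colour into which one can ``dump'' repaired vertices, so the alteration/LLL step must be handled symmetrically, complicating the bookkeeping. This asymmetry is also reflected in the hypothesis excluding matchings when $r=2$: for $H$ a matching, $m_1(H)$ is attained by a single edge, predicting threshold $p \sim n^{-1}$, whereas the true $(H,2)_v$-Ramsey threshold is governed by the appearance of a large matching in $G(n,p)$, a separate phenomenon. Verifying that no such degeneracy occurs for the $H$ allowed by the hypothesis, and establishing the LLL-applicable per-vertex copy-count bound (likely via Chebyshev or a small martingale argument on the number of extensions of a vertex to a copy of $H$), is where the real work lies. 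By contrast, the $1$-statement reduces to a careful accounting of the Janson exponent plus a routine union bound, and I expect it to go through smoothly once the exponent-at-least-$1$ calculation for every non-trivial $F \subseteq H$ is in hand.
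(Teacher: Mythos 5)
Your $1$-statement argument is sound and matches the intuition the paper itself describes for Theorem~\ref{thm:onegraphthreshold}: for each subgraph $F\subseteq H$ with $e_F\ge 1$, the defining inequality $m_1(H)\ge e_F/(v_F-1)$ gives $v_F - e_F/m_1(H)\ge 1$, so $\Phi_{H,p}=\Omega(Cn)$ on any linear-sized set, Janson kills each $S$ with probability $\exp(-\Omega(Cn))$, and a union bound over $2^n$ sets finishes for $C$ large. This is essentially the standard proof.

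Your $0$-statement, however, has a genuine gap, and it is not merely a matter of bookkeeping. First, the claim ``w.h.p.\ every vertex lies in $O(c^{e_H})$ copies of $H$'' is false. At $p=cn^{-1/m_1(H)}$ the \emph{expected} number of copies of $H$ through a fixed vertex is $\Theta\bl c^{e_H} n^{v_H-1-e_H/m_1(H)}\br$; this exponent is $0$ only when $H$ itself attains the maximum in Definition~\ref{def:onedensity} (i.e.\ $H$ is $1$-balanced), and even then the \emph{maximum} over vertices is unbounded (polylogarithmic, by standard concentration). Similarly, the total number of copies of $H$ need not be $O(c^{e_H}n)$ when $H$ is $1$-unbalanced. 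Second, even granting constant expectation, the Local Lemma requires a uniform bound on the dependency degree of each bad event (two copies of $H$ sharing a vertex), and this degree is not $O(1)$ w.h.p.; so the condition $e\cdot r^{1-v_H}\cdot(d+1)\le 1$ cannot be met. Third, the proposed alteration (``recolour one vertex in each offending copy'') has no termination argument: recolouring a vertex can create new monochromatic copies of $H$ in the new colour, and for $r=2$ there is no spare colour to absorb them.

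The actual route (in~\cite{luczak1992ramsey}, and sketched in this paper for the generalisation in Section~\ref{sec:families}) is structural rather than probabilistic-local: one assumes for contradiction that $G(n,p)$ is $(H,r)_v$-Ramsey, passes to a minimal ``critical'' subgraph $G'$, and shows via an explicit algorithm that $G'$ must contain one of a bounded family $\cG$ of graphs obtained by gluing copies of $H$ at single vertices. Each $G\in\cG$ is shown to be either large or to satisfy a strong density condition, so that at $p\le cn^{-1/m_1(H)}$ a union bound over $\cG$ gives that no such $G$ appears w.h.p. Your first-moment plus LLL/alteration scheme does not capture this mechanism, and I do not see how to repair it; you would need to replace it wholesale with the critical-subgraph argument.
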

Intuitively, Theorem~\ref{thm:onegraphthreshold} states that a typical `sparse' $n$-vertex graph, i.e.\ one with density at most $cn^{-1/m_1(H)}$, is not $(H,\ell)_v$-Ramsey; whilst a typical `dense' $n$-vertex graph, i.e. one with density at least $Cn^{-1/m_1(H)}$, is $(H,\ell)_v$-Ramsey.

The following result of Kreuter~\cite{kreuter1996threshold} provides an asymmetric generalisation 
of Theorem~\ref{thm:onegraphthreshold}. To state it we require an asymmetric version of the $1$-density.
\begin{dfn}[Kreuter densities]\label{kreuterdensities}
Given two graphs $F$ and $H$ with at least one edge, such that $m_1(F)\leq m_1(H)$ we define \[m_K(F,H):=\max\left\{\frac{m_1(F)+e_J}{v_J}:J\subseteq H, v_J\geq 2\right\}.\]
\end{dfn}

Note that simple calculations imply that $m_1(F)\leq m_K(F,H)\leq m_1(H)$ and thus if $m_1(F)=m_1(H)$ then $m_K(F,H)$ is also the same value.

\begin{thm}[Kreuter~\cite{kreuter1996threshold}] \label{thm:kreuterthresholds}
Let  $\ell\geq 2$ and $H_1,\ldots, H_\ell$ be  graphs such that $m_1(H_1)\leq \ldots \leq m_1(H_\ell)$. Suppose further that  $H_{\ell-1}$ and $H_\ell$ contain at least one edge and $H_\ell$ is not a matching if $\ell=2$. Then there exist constants $c, C>0$ such that 
 \begin{itemize}
     \item if $p\geq Cn^{-1/m_K(H_{\ell-1},H_\ell)}$, then w.h.p.\ $G(n,p)$ is  $(H_1,\dots, H_\ell)_v$-Ramsey;
     \item if $p\leq cn^{-1/m_K(H_{\ell-1},H_\ell)}$, then w.h.p.\ $G(n,p)$ is not $(H_1,\dots, H_\ell)_v$-Ramsey.
\end{itemize}
\end{thm}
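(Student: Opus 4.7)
The statement is a dichotomy consisting of a 0-statement (below threshold) and a 1-statement (above threshold), and I would handle the two directions separately. The common thread is that the critical density $m_K(H_{r-1},H_r)$ is engineered to balance the obstructions coming from the two heaviest colour classes $V_{r-1}$ and $V_r$, while the lower-density $H_1,\dots,H_{r-2}$ are comparatively easy to avoid since $m_1(H_i) \leq m_K(H_{r-1},H_r)$ for every $i \leq r-2$.

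For the 0-statement, my plan is to construct, w.h.p., an $r$-colouring of $G = G(n,p)$ with $p \leq c n^{-1/m_K(H_{r-1},H_r)}$ via a random-colouring plus deletion argument. I would fix positive constants $q_1,\dots,q_r$ summing to $1$ and independently assign each vertex to colour $i$ with probability $q_i$. The key computation, isolating the subgraph $J \subseteq H_r$ that achieves the maximum in the definition of $m_K$, is that one can choose $q_{r-1}$ and $q_r$ so that the expected numbers of copies of $H_{r-1}$ in $V_{r-1}$ and of $H_r$ in $V_r$ are simultaneously at most $\delta n$, while the analogous count for $H_i$ in $V_i$ ($i \leq r-2$) is also at most $\delta n$ thanks to $m_1(H_i) \leq m_K$. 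For $c$ small, $\delta$ can be made arbitrarily small, so deleting one vertex per monochromatic copy removes only $o(n)$ vertices in total; these can be reassigned among the colour classes without creating further bad copies by another deletion pass. Chernoff bounds control the sizes of the $V_i$ and a second-moment argument gives concentration of the copy counts.

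For the 1-statement, I would use a union bound over partitions $V(G) = V_1 \cup \cdots \cup V_r$, aiming to show that the probability of a particular partition forming a valid ``no monochromatic $H_i$'' colouring is much less than $r^{-n}$. Since $V_{r-1}$ and $V_r$ are disjoint, the events ``$G[V_{r-1}]$ contains no $H_{r-1}$'' and ``$G[V_r]$ contains no $H_r$'' are independent, and Janson's inequality bounds each by an exponential in the expected number of copies (taken over the densest subgraph). Substituting $p = C n^{-1/m_K(H_{r-1},H_r)}$ and optimising over the sizes $t_{r-1} = |V_{r-1}|$ and $t_r = |V_r|$ subject to $t_{r-1} + t_r \leq n$, the combined exponent is $\Omega(n)$ with a constant that grows with $C$, which beats the entropy $\binom{n}{t_{r-1}}\binom{n}{t_r}$ of the partition choice. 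The point of $m_K$ is exactly to balance the two Janson exponents so that the worst case $(t_{r-1},t_r)$ lies on the boundary where both are of the same order.

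The principal technical obstacle appears in the 0-statement when $m_1(H_i) = m_K(H_{r-1},H_r)$ for some $i \leq r-2$: in this boundary case $G(n,p)$ already contains $\Theta(n)$ copies of $H_i$ at the critical density, and the naive deletion may strip a constant fraction of $V_i$, so one must verify that the reassigned vertices can be absorbed by the other colour classes without disrupting them. I would handle this via a two-round exposure, revealing first a $(1-\eta)$-fraction of the edges and running the random colouring plus deletion on this subgraph, then using the remaining edges with a Lovász Local Lemma argument to absorb the deleted vertices. A secondary subtlety in the 1-statement is verifying Janson's hypotheses, namely that the expected count of $H_i$-copies in $V_i$ dominates the correlation sum; this is exactly where the assumption that $H_{r-1}$ and $H_r$ contain at least one edge, and that $H_r$ is not a matching when $r=2$, enters the proof.
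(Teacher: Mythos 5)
Your proposal takes a fundamentally different route from Kreuter's proof (which the paper cites, and whose $0$- and $1$-statements the paper generalises in Proposition~\ref{prop:0statementforfams} and Theorem~\ref{thm:robustKreuter1} using Kreuter's original methods), and both directions contain genuine gaps that are not repairable within the strategy you sketch.

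For the $0$-statement, the random-colouring-plus-deletion argument breaks down precisely in the asymmetric case it is meant to cover. If $m_1(H_{r-1}) < m_K(H_{r-1},H_r)$ and $J\subseteq H_{r-1}$ is $1$-balanced with $e_J=m_1(H_{r-1})(v_J-1)$, then the expected number of copies of $J$ inside a linear colour class $V_{r-1}$ of size $q_{r-1}n$ at $p=cn^{-1/m_K}$ is $\Theta\bigl(q_{r-1}^{v_J}c^{e_J}n^{v_J-e_J/m_K}\bigr)$, and $v_J-e_J/m_K = 1 + (v_J-1)\bigl(1-\tfrac{m_1(H_{r-1})}{m_K}\bigr) > 1$. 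So the count is $\omega(n)$ for \emph{every} choice of the constants $q_{r-1}$ and $c$; you cannot delete a superlinear number of copies from a linear set, and no local absorption or two-round trick fixes this. Kreuter's actual $0$-statement is not a deletion argument at all: one assumes no valid colouring exists, shows $G(n,p)$ must then contain a member of an explicit family $\cG$ of ``glued'' graphs built by an iterative procedure, and rules these out by a first-moment bound (this is what the paper sketches for the family version in Section~\ref{sec:families}).

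For the $1$-statement, the union bound over all $r$-colourings with Janson is off by a logarithmic factor in exactly the critical regime. Write $\ell = m_1(H_{r-1})/m_K(H_{r-1},H_r) \le 1$ (the exponent such that $\Phi_{H_r,p}=\Theta(n^\ell)$ at $p=Cn^{-1/m_K}$, cf. (\ref{eq:ell}) and (\ref{eq:densities}) in Section~\ref{sec:robustKreuter1statement}). Consider colourings with $|V_{r-1}|=s=n^\ell$ and $|V_r|=n-o(n)$ (for $r=2$; the general case is similar). The entropy of such colourings is $\binom{n}{s}=\exp\bigl(\Theta(n^\ell\log n)\bigr)$, while the Janson exponents you get are $\Phi_{H_{r-1},p}(V_{r-1})=\Theta(C^{e}n^\ell)$ and $\Phi_{H_r,p}(V_r)=\Theta(C^{e}n^\ell)$. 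For any fixed $C$ the entropy term dominates by a factor of $\log n$, so the union bound fails. Kreuter's proof (and the paper's Theorem~\ref{thm:robustKreuter1}) circumvents this by a two-round exposure: in round one, find $\Theta(n^\ell)$ vertex-disjoint copies of $H_r$; in round two, union-bound only over the $v_{H_r}^{\Theta(n^\ell)}=\exp(O(n^\ell))$ transversals of this family and use Janson to embed $H_{r-1}$ in each transversal. This replaces the $n^\ell\log n$ entropy by $n^\ell$ and lets a large constant $C$ close the gap. The restriction to transversals is the essential idea you are missing, and it is also why the balance condition in Definition~\ref{kreuterdensities} appears: it is exactly what makes the exponent of $\Phi_{H_{r-1},p}$ over an $n^\ell$-vertex set equal to $\ell$.

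Finally, a smaller point: you set $J\subseteq H_r$ as the maximiser for $m_K$, but at the threshold density it is the sparser graph $H_{r-1}$, not $H_r$, whose copies proliferate, so the main estimate needs to be run for subgraphs of $H_{r-1}$, not $H_r$.
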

Note that there has been significant interest in (edge) Ramsey properties of random graphs also.
See~\cite{random3} for an analogue of Theorem~\ref{thm:onegraphthreshold} in this setting and 
e.g.~\cites{kohayakawa1997threshold, mns} for results
on asymmetric Ramsey properties of random graphs.

\subsection{Randomly perturbed graphs}
Theorems~\ref{thm:onegraphthreshold} and~\ref{thm:kreuterthresholds} give us information on the 
vertex Ramsey properties of \emph{typical graphs} of a given density. In this paper, we are interested in 
measuring \emph{how far away} a dense graph is from having a given vertex Ramsey property.
 The
model of randomly perturbed graphs, introduced by Bohman, Frieze and Martin~\cite{bfm1}, provides a
framework for studying such questions. In their model one starts with a dense graph and then adds
a given number of random edges to it. A natural problem in this setting is to determine how many
random edges are required to ensure that the resulting graph w.h.p.\
satisfies a given property.

Over recent years there has been a wealth of research in the area of randomly perturbed graphs, including
results on embedding (spanning) subgraphs 
(e.g.\ Hamilton cycles, spanning trees and subgraphs of bounded degree) in such 
graphs (see e.g.~\cites{ bwt2,  bfkm, bfm1,  bhkmpp, bmpp2, dudek, hmt, joos2, kks1, kst, nt}). 
In 2006, Krivelevich,
Sudakov and Tetali~\cite{kst} initiated the study of (edge) Ramsey properties of randomly perturbed graphs.
Combining their work with recent results of the first and third authors~\cite{dastreg} and of Powierski~\cite{power},
we now know the number of random edges one needs to add to an $n$-vertex graph of 
positive density to w.h.p.\ ensure the 
resulting graph is $(K_r,K_s)$-Ramsey for all values of $(r,s)$, except for the case when $r=4$ and $s \geq 5$.
See~\cite{dastreg} for other results on this topic.

In this paper, we   focus on vertex Ramsey properties of randomly perturbed 
graphs; in particular we resolve the $( K_r,H)_v$-Ramsey problem for $r\geq 2$ and arbitrary $H$. 
To state 
our results we first introduce the following notation.
\begin{dfn} \label{def:perturbedthresholdfunctions}
Fix some $d\in [0,1]$. Then for a pair of graphs $(F,H)$, we say that $p=p(n)$ is a 
\emph{perturbed vertex Ramsey threshold function} for the pair $(F,H)$ at density $d$ if:
\begin{enumerate}[label=(\roman*)]
    \item For any 
    $q(n)=\omega( p(n))$ and any  sequence $(G_n)_{n\in \mathbb{N}}$ of graphs of  density\footnote{Here we refer to the standard density of $G$. That is $d(G)=\frac{e_G}{\binom{v_G}{2}}$.} at least $d$ with $v_{G_n}=n$ for each $n\in 
    \mathbb N$, with high probability $G_n\cup G(n,q)$ is $(F,H)_v$-Ramsey.
    \item There exists a sequence of $n$-vertex graphs $(G_n)_{n\in \mathbb{N}}$ of density at least $d$, such that if 
    $q(n)=o( p(n))$, then with high probability $G_n\cup G(n,q)$ is not $(F,H)_v$-Ramsey.
    \end{enumerate}
    We denote by $p(n;F,H,d)$, the\footnote{As is the case in random graph theory, the threshold function is not uniquely determined but rather determined up to constants.} perturbed vertex Ramsey threshold function for $(F,H)$ at density $d$. If there exist $C,c>0$ such that $q(n)\geq Cp(n)$ suffices for (i) and $q(n)\leq cp(n)$ suffices for (ii), we say that the threshold function is \emph{sharp}.
    If it is the case that every sufficiently large graph of density at least $d$
    is $(F,H)_v$-Ramsey then we define $p(n;F,H,d):=0$.
\end{dfn}
Note we can analogously define the perturbed vertex Ramsey threshold function
for the $\ell$-coloured case; that is,
 given graphs $H_1, \dots, H_\ell$ we  define the threshold $p(n;H_1,\dots, H_\ell,d)$ in the 
 natural way.
If $H_1=\dots =H_\ell=H$ we write $p(n;H_1,\dots, H_\ell,d)$ as $p(n; H,\ell,d)$.

\begin{exmp}
Casting Theorem \ref{thm:kreuterthresholds} into this notation, we have that $p(n;F,H,0)=n^{-1/m_K(F,H)}$, for a pair of graphs $F,H$ with $m_1(F) \leq m_1(H)$ (when $E(F)$ is nonempty and $H$ is not a matching) and this threshold is sharp.
\end{exmp}

Before we state our main result it is instructive to consider the following
result of Krivelevich, Sudakov and Tetali~\cite{kst}, which determines how many random edges need to be added to a dense graph to force the appearance of $H$ as a subgraph.  In our vertex-Ramsey framework, this corresponds to making the graph $(K_1, H)_v$-Ramsey. The corresponding threshold probability requires the following definition.
\begin{dfn} \label{def:m}
For a graph $H$, the appearance threshold for $H$ in the random graph $G(n,p)$ is determined by the parameter
 \[m(H):=\max\left\{\frac{e_J}{v_J}: J\subseteq H, v_J>0 \right\}. \]
Now, given any $k \in \mathbb N$, let
\[ m(H;k):=\min_{H_1\cup \dots\cup H_k=H;} \max _{i: H_i \neq \emptyset} m(H_i),\]
where the minimum is over all partitions of $H$ into $k$ induced subgraphs.\footnote{By a partition of $H$ into $k$ induced subgraphs, we mean there are $k$ (possibly  empty)
graphs $H_1, \dots, H_k$ such that each $H_i$ is an induced subgraph of $H$, the $H_i$ are all pairwise
vertex-disjoint, and $V(H)=V(H_1) \cup \dots \cup V(H_k)$.} 
\end{dfn}

Suppose that $G$ is a graph of density more than $1 - 1/(k-1)$ and we wish to find a copy of $H$ in $G \cup G(n,p)$. Informally, we partition $H$ into $k$ parts $H_1, \hdots, H_k$ that are as sparse as possible, with the idea being to use the (few) edges of $G(n,p)$ to build the parts $H_i$, and then find the edges between parts in the dense graph, thereby completing a copy of $H$.  Note that $m(H;k) = 0$ if and only if $\chi(H) \le k$, in which case we can partition $H$ into $k$ independent sets.  Then we do not require any random edges; the dense graph itself will already contain $H$.

\begin{thm}[Krivelevich,
Sudakov and Tetali~\cite{kst}]\label{thm:kst}
Let $0 < d < 1$ be  fixed  and let $k \geq 2$ be the unique integer
satisfying $1-1/(k-1) < d \leq 1 -1/k$.
Let $H$ be a graph with at least one edge. Then
\[p(n;K_1,H,d)=n^{-1/m(H;k)},\]
where here, and throughout, we follow the convention that $n^{-1/0}:=0$.
\end{thm}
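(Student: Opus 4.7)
The plan is to prove the two bounds implicit in Theorem~\ref{thm:kst} separately, matching the two parts of Definition~\ref{def:perturbedthresholdfunctions}.

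\textbf{Lower bound.} I would take $G_n$ to be the complete balanced $k$-partite graph on $n$ vertices, whose density is $1 - 1/k \geq d$. Since the only edges of $G_n \cup G(n,q)$ inside a single class of $G_n$ come from $G(n,q)$, any copy of $H$ in $G_n \cup G(n,q)$ determines a partition of $V(H)$ into $k$ induced subgraphs $H_1,\dots,H_k$ (by recording into which class of $G_n$ each vertex of $H$ is embedded) such that every $H_i$ appears in $G(n,q)$. By the definition of $m(H;k)$, every such partition satisfies $\max_i m(H_i) \geq m(H;k)$, and for $q = o(n^{-1/m(H;k)})$ this maximising index $i$ then satisfies $q = o(n^{-1/m(H_i)})$; a first-moment calculation on the densest subgraph of $H_i$ shows $G(n,q)$ is $H_i$-free w.h.p. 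A union bound over the finitely many partitions of $V(H)$ into $k$ labelled parts then yields that $G_n \cup G(n,q)$ contains no copy of $H$ w.h.p., giving the matching construction required by Definition~\ref{def:perturbedthresholdfunctions}(ii).

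\textbf{Upper bound.} Fix an $n$-vertex graph $G$ of density at least $d > 1 - 1/(k-1)$ and let $(H_1,\dots,H_k)$ be a partition of $V(H)$ into $k$ induced subgraphs achieving $m(H;k)$. The first step is to extract a linear-sized ``$K_k$-template'' inside $G$: apply Szemer\'edi's regularity lemma with parameter $\varepsilon > 0$ and observe that the reduced graph has density at least $d - o(1) > 1 - 1/(k-1)$, so by Tur\'an's theorem it contains a copy of $K_k$. The corresponding $k$ clusters $V_1,\dots,V_k \subseteq V(G)$ each have size $\Omega(n)$ and, after removing a small fraction of atypical vertices, can be taken to be pairwise super-regular in $G$ with density bounded below by some constant $d' > 0$. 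The embedding goal becomes to place $V(H_i)$ inside $V_i$ for each $i$, realising the edges of $H_i$ as $G(n,q)$-edges within $V_i$ and the cross-edges of $H$ as $G$-edges between distinct clusters.

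To carry out the embedding I would split the random edges into $k$ independent slices $G(n,q_1),\dots,G(n,q_k)$ with $q_i = \Theta(q/k) = \omega(n^{-1/m(H;k)})$ and embed the $H_i$ sequentially in $i = 1, \dots, k$. Once images for $V(H_1),\dots,V(H_{i-1})$ have been chosen, I restrict $V_i$ to the common $G$-neighbourhood $V_i^*$ of those already-embedded vertices that are $H$-adjacent to $V(H_i)$; by $\varepsilon$-regularity together with the assumption that each previously chosen image sits in a typical position, $|V_i^*| = \Omega(n)$. Since $q_i = \omega(n^{-1/m(H_i)})$ and $V_i^*$ has linear size, Janson's inequality applied to $G(n,q_i)[V_i^*] \sim G(|V_i^*|,q_i)$ produces a copy of $H_i$ w.h.p., completing the inductive step. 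The main technical obstacle is coordinating the successive regularity-based restrictions with the random-graph existence claims, since $V_i^*$ depends on previously chosen (random) images; I would address this by proving a ``many-copies'' strengthening of the appearance theorem, namely that w.h.p.\ $G(n,q_i)[V_i^*]$ contains $\Omega(|V_i^*|^{v(H_i)} q_i^{e(H_i)})$ copies of $H_i$, so that a copy whose vertices are typical with respect to the remaining clusters can always be selected, allowing the induction to run through all $k$ steps.
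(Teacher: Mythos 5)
Your proposal is correct and follows the same approach the paper itself takes: Theorem~\ref{thm:kst} is cited from Krivelevich--Sudakov--Tetali and not reproved in the paper, but the paper notes it is the $r=1$ case of Theorem~\ref{thm:main}, whose proof in Section~\ref{sec:proof} specialises to exactly your strategy. Your lower bound --- the balanced complete $k$-partite graph together with a first-moment union bound over the finitely many labelled $k$-partitions of $V(H)$ --- is the paper's $0$-statement with $r=1$, where the degenerate $e(F)=0$ branch of Proposition~\ref{prop:0statementforfams} plays the role of your appearance-threshold argument. Your upper bound --- regularity plus Tur\'an to extract $k$ pairwise regular dense clusters, then sequential embedding along the minimising partition $H=H_1\cup\cdots\cup H_k$ --- also matches the paper's $1$-statement, with a few cosmetic differences: the paper does not slice $G(n,q)$ into $k$ rounds, relying instead on the fact that the restrictions $G(n,q)[V_i]$ to disjoint clusters are already independent (which already gives the sequential conditioning you want); it does not pass to super-regular pairs, but restricts attention to ``popular'' tuples with large common $G_n$-neighbourhoods in every other cluster (Lemma~\ref{lem:regproperties} ensures almost every tuple is popular); and instead of your on-the-fly ``many-copies'' application of Janson's inequality it front-loads the probabilistic work by embedding, once, a popular universal graph $\Gamma_i$ in each $V_i$ containing copies of all the sparse induced subgraphs of $H$. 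The conditioning obstacle you flag is genuinely handled by this independence-across-clusters observation, so the conditional Janson failure probability being $o(1)$ uniformly suffices and no union bound over exponentially many $V_i^*$ is required; your slicing of $G(n,q)$ achieves the same thing and is also fine. One thing worth stating explicitly at the outset is that $(K_1,H)_v$-Ramsey is equivalent to simply containing a copy of $H$ (the only vertex colouring with no red $K_1$ is the all-blue one), since this reduction is what lets both directions of your argument be phrased as appearance statements.
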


Our main result (Theorem~\ref{thm:main} below) essentially resolves the $(H_1,H_2)_v$-Ramsey problem for randomly perturbed graphs for all pairs $(H_1,H_2)$ involving at least one clique. To state this result we define some notation capturing the probabilistic vertex Ramsey thresholds for all pairs of graphs.
\begin{dfn}
Given graphs $F$ and $H$, we write
\[ \beta(F,H) := \begin{cases}
    m_K(F,H) & \text{if } m_1(F) \le m_1(H), e(F) \ge 1, \\
    m_K(H,F) & \text{if } m_1(H) < m_1(F), e(H) \ge 1, \\
    m(H) & \text{if } e(F) = 0, e(H) \ge 1, \\
    m(F) & \text{if } e(H) = 0, e(F) \ge 1, \\
    0 & \text{if } e(F) = e(H) = 0.
    \end{cases}
\]
\end{dfn}
That is, $\beta(F,H)$ is defined so that $n^{-1/\beta(F,H)}$ is the threshold for the property that $G(n,p)$ is $(F,H)_v$-Ramsey.  We can now define our perturbed vertex Ramsey threshold, which is an extension of Definition~\ref{def:m}.
\begin{dfn} \label{def:mstar}
Given $r\in \mathbb N$, $k \geq 2$ and a graph $H$, define
$$m^*(K_r,H;k):= \max_{r_1+\dots +r_k\leq r-1;} \min_{H_1\cup \dots \cup H_k=H; } \max_{i: H_i \neq \emptyset}
\beta (K_{r_i+1}, H_i).$$
Here the first maximum is taken over all tuples $(r_1,\dots, r_k)$ of non-negative integers
that sum to at most $r-1$; the minimum is over all partitions of $H$ into $k$ induced 
subgraphs;
the final maximum is over all $i$ such that $H_i$ contains 
at least one vertex.
\end{dfn}

\begin{thm}\label{thm:main}
Let 
 $0 < d < 1$ be  fixed  and let $k \geq 2$ be the unique integer
satisfying $1-1/(k-1) < d \leq 1 -1/k$.
 Given any $r\in \mathbb N$ and any graph $H$ we have
$$p(n;K_r,H,d) = n^{-1/m^*(K_r, H;k)}.$$
\end{thm}
Note that Theorem~\ref{thm:main} is general in the sense that  it covers the full range
of densities $d \in (0,1)$ (not just small values of $d$). Further,
Theorem~\ref{thm:kst} is precisely the $r=1$ case of 
Theorem~\ref{thm:main}.

By computing the values of $m^* (K_s,K_t;2)$ one obtains from Theorem~\ref{thm:main} the following more explicit result for pairs of cliques, where for $a \leq b \in \mathbb{N}$, we define $\Psi(n;a,b):= n^{-\frac{1}{m_K(K_a,K_b)}}$.
\begin{cor}\label{thm:cliques}
Let $3\leq s\leq t$, $d\in(0,1/2)$. Then
\[
p(n;K_s,K_t,d)=\left\{ 
\begin{array}{llc}
   
   \Psi(n;t-1,t-1)=n^{-\frac{2}{t-1}} & \mbox{if }s=t; &(i)  \\
      \Psi(n;t-s,s) & \mbox{if } \frac{t+1}{2}\leq s <t; &(ii)\\
      \Psi(n;s,t/2) & \mbox{if } 3\leq s \leq t/2 \mbox{ and } t=2t' \mbox{ is even;} &(iii)\\
       \Psi(n;\floor{\frac{s+1}{2}},\frac{t+1}{2}) & \mbox{if } 3\leq s \leq (t-1)/2 \mbox{ and } t=2t'-1 \mbox{ is odd.} &(iv)
      
\end{array}\right.
\]
Moreover, these thresholds can be taken to be sharp whenever $s\notin\{t/2,t\}$. \qed
\end{cor}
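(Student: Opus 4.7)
The plan is to apply Theorem~\ref{thm:main} and then evaluate $m^*(K_s,K_t;2)$ explicitly. Since $d\in(0,1/2)$ forces $k=2$, the threshold equals $n^{-1/m^*(K_s,K_t;2)}$. Setting $p_i:=r_i+1$ and noting that every partition of $V(K_t)$ into two induced subgraphs has the form $K_a\cup K_{t-a}$ for some $0\le a\le t$, Definition~\ref{def:mstar} becomes the finite max-min
\[
m^*(K_s,K_t;2)\;=\;\max_{\substack{p_1,p_2\ge 1\\ p_1+p_2\le s+1}}\ \min_{0\le a\le t}\ \max_{i\,:\,a_i\ne 0}\ \beta(K_{p_i},K_{a_i}),
\]
with $a_1:=a$ and $a_2:=t-a$. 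As a preliminary I would record the closed forms $m(K_q)=(q-1)/2$ and, for $2\le p\le q$, $m_K(K_p,K_q)=(p+q(q-1))/(2q)$, which follow from the convexity of $j\mapsto p/(2j)+(j-1)/2$ on $[2,q]$. These, combined with the casework in the definition of $\beta$, evaluate the objective at any triple $(p_1,p_2,a)$.

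The main task is to locate, in each of the four cases, a saddle point $(p_1^*,p_2^*;a^*)$ whose value matches the $m_K$ claimed in the statement. I would take $(t,1;0)$ in case~(i), where the only surviving term is $\beta(K_1,K_t)=m(K_t)=(t-1)/2=m_K(K_{t-1},K_{t-1})$; $(s,1;t-s)$ in case~(ii), where $\beta(K_s,K_{t-s})=m_K(K_{t-s},K_s)$ dominates $\beta(K_1,K_s)=(s-1)/2$ because $s>t-s$; $(s,1;t/2)$ in case~(iii), where $\beta(K_s,K_{t/2})=m_K(K_s,K_{t/2})$ dominates $m(K_{t/2})=(t-2)/4$; and $(\lceil(s+1)/2\rceil,\lfloor(s+1)/2\rfloor;(t-1)/2)$ in case~(iv), with binding term $m_K(K_{\lfloor(s+1)/2\rfloor},K_{(t+1)/2})$. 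For each case, the \emph{lower bound} on $m^*$ is the easy direction: fixing $(p_1^*,p_2^*)$, one uses monotonicity of $a\mapsto\beta(K_p,K_a)$ and the comparisons above to verify that for every choice of $a$, at least one of the two $\beta$-terms is at least the target value.

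The main obstacle is the matching \emph{upper bound} on $m^*$: for every admissible $(p_1,p_2)$ one must exhibit a specific partition $a=a(p_1,p_2)$ showing that $\max_i\beta(K_{p_i},K_{a_i})$ does not exceed the target value. This will require a case analysis based on how $\max(p_1,p_2)$ compares to $t/2$ and to $t-1$, with the convexity used in computing $m_K$ supplying the key inequalities. Finally, the sharpness assertion for $s\notin\{t/2,t\}$ would be inherited from the proof of Theorem~\ref{thm:main}, which yields matching constants precisely when the optimizing $(p_1^*,p_2^*;a^*)$ is essentially unique; in the excluded cases $s=t$ and $s=t/2$, multiple optimal partitions $a$ coexist (for instance, both $a=0$ and $a=1$ attain the minimum in case~(i)), and this degeneracy prevents the constants from matching.
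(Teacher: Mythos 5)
Your overall strategy matches the paper's: set $k=2$ in Theorem~\ref{thm:main}, rewrite $m^*(K_s,K_t;2)$ as the discrete max-min
\[
\max_{p_1+p_2\le s+1,\ p_i\ge1}\ \min_{0\le a\le t}\ \max_{i:a_i>0}\beta(K_{p_i},K_{a_i}),
\]
and evaluate it using $m(K_q)=(q-1)/2$ and, for $2\le p\le q$, $m_K(K_p,K_q)=\tfrac{p}{2q}+\tfrac{q-1}{2}$ (the convexity argument is sound). Your candidate optimizing triples in each of the four cases do produce the claimed values, and you have correctly identified that the genuine work is the upper bound on $m^*$ --- exhibiting for \emph{every} admissible $(p_1,p_2)$ a partition $a$ keeping both $\beta$-terms at or below the target. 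You leave this as "a case analysis," which matches the paper's own level of detail (the corollary is stated with no proof and the text merely says "by computing the values..."), so I won't count it as a gap, but be aware it is where the substance lies; you should at minimum verify one representative case.

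The part of your write-up that does not hold up is the sharpness discussion. You assert, without support in the paper, that Theorem~\ref{thm:main} "yields matching constants precisely when the optimizing $(p_1^*,p_2^*;a^*)$ is essentially unique," and you attribute the non-sharpness at $s\in\{t/2,t\}$ to degeneracy of the optimal partition $a$. This is not the mechanism the paper uses, and it does not survive inspection. The paper's reason (stated just after the corollary) is that the threshold is sharp exactly when it is governed by Kreuter's Theorem~\ref{thm:kreuterthresholds} --- i.e.\ when the binding $\beta$-value is $m_K$ of two graphs each with an edge, for which both the $0$- and $1$-statements hold up to constant factors --- and is not sharp when the governing constraint is a subgraph appearance threshold $m(\cdot)$, which only yields $o(\cdot)$ and $\omega(\cdot)$ statements. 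Your "uniqueness" heuristic misdiagnoses this: for instance with $s=t-1$, $t=4$ (case~(ii), claimed sharp), the optimizing triple is not unique --- both $(p_1,p_2,a)=(3,1,1)$ and $(2,2,2)$ attain $m^*=1$ --- yet the threshold \emph{is} sharp, because the second optimizer has both $r_i\ge1$, so the $0$-statement can be run entirely through Kreuter-type constraints, avoiding the degenerate appearance case in Proposition~\ref{prop:0statementforfams}. Conversely, at $s=t$ and $s=t/2$, every maximizing $(r_1,r_2)$ forces some $r_i=0$, so the $0$-statement colouring must rely on $G(n,p)[V_i]$ avoiding a fixed clique, which is precisely a subgraph appearance event and admits no sharp constant. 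So the correct criterion is whether \emph{some} optimizer avoids the appearance-type $\beta$, not whether the optimizer you happened to pick is unique.
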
 
Although Theorem~\ref{thm:main} does not always guarantee sharp thresholds, by analysing its proof one can see that, in all cases except when $s \in \{t/2, t\}$, 
the threshold in Corollary~\ref{thm:cliques} is determined by Theorem~\ref{thm:kreuterthresholds}, which does provide a sharp result. Hence, we obtain the moreover part
of the corollary. On the other hand,  when $s \in \{t/2, t\}$, the threshold probability is not sharp. Indeed, this is since the threshold  comes from the appearance of a subgraph, which does not have a sharp threshold.\footnote{More precisely, consider the case when $s=t$. Let $G$ be the complete balanced bipartite graph on $n$ vertices with classes $A$ and $B$. Given any constant $C$, define $p=C \Psi(n;t-1,t-1)$. If we colour $A$ red and $B$ blue, $G \cup G(n,p)$ has a monochromatic copy of $K_t$ if and only if $G(n,p)[A]$ or $G(n,p)[B]$ contains $K_t$ as a subgraph. The probability of this occurring is bounded away from both $0$ and $1$, showing the lack of a sharp threshold. The case when $s=t/2$ is similar.}

Recall that in the random graph setting one needs $\Theta(n^{2-1/m_K(K_s,K_t)})$ random edges to ensure 
$G(n,p)$ is $(K_s,K_t)_v$-Ramsey. Corollary~\ref{thm:cliques} demonstrates that one needs far fewer random
edges 
to make any dense $n$-vertex graph $(K_s,K_t)_v$-Ramsey. However, the precise number of random edges depends
(in a rather subtle way) on arithmetic properties of the pair $(s,t)$.

\subsection{Some intuition for vertex Ramsey problems in randomly perturbed graphs}
In this section our aim is to convince the reader that the vertex Ramsey problem for
randomly perturbed graphs
is in general  more subtle than its counterpart in the random graph setting.

In Theorem~\ref{thm:onegraphthreshold} the threshold is \emph{universal} in the following sense:
the threshold for $G(n,p)$ being $(H,\ell)_v$-Ramsey is the point above which  every linear sized subset of
$G(n,p)$ w.h.p.\ contains a copy of $H$. It is easy to see that this property guarantees a
graph is $(H,\ell)_v$-Ramsey (as one of the colour classes in any vertex $\ell$-colouring will
have linear size).
Thus, crucially the `reason' for the location of the threshold is the \emph{same} for every graph $H$ (that is
not a matching). Moreover, this reason is independent of the number of colours used.

Similarly, the threshold in Theorem~\ref{thm:kreuterthresholds} is universal. Indeed, given \emph{any}
sequence of graphs $H_1,\ldots, H_\ell$ as in the theorem,
 the intuition behind
the threshold for the property of $G(n,p)$ being $(H_1,\dots, H_\ell)_v$-Ramsey is the same:
the threshold is the point at which the expected number of vertex-disjoint copies of $H_\ell$ is roughly
the same order of magnitude as the maximal order of a $H_{\ell-1}$-free subgraph of $G(n,p)$. (See the discussion
in~\cite{kreuter1996threshold}.) Again this threshold does not depend on the number $\ell$ of colours.

On the other hand, the intuition behind where the parameter $m^*(K_r,H;k)$
comes from is more involved than the intuition for 
Theorems~\ref{thm:onegraphthreshold} and~\ref{thm:kreuterthresholds}; we discuss this more
when proving Theorem~\ref{thm:main}. 
Moreover,
the threshold for the perturbed vertex Ramsey
problem can depend on the number of colours. Indeed,
we saw in Corollary~\ref{thm:cliques} that for every $ r\geq 3$ and $d\in (0,1/2)$
the number of random edges required to ensure an $n$-vertex graph of density $d$ is w.h.p.\ 
$(K_r,2)_v$-Ramsey 
is significantly smaller than $p(n;K_r,2,0)$; that is, 
significantly smaller than the number of edges needed to ensure
$G(n,p)$ is w.h.p.\ $(K_r,2)_v$-Ramsey. On the other hand, given any 
$\ell \geq 4$, we actually have that $p(n;K_{r},\ell,d)=p(n;K_{r},\ell,0)$ for all $d\in (0,1/2]$.
In fact, this phenomenon is part of a more general observation.

\begin{obs}\label{obs:multicol}
Let  $\ell \geq 4$ and $H$ be graph with at least one edge. Let $d\in (0,1/2]$. Then
$$p(n;H,\ell,d) = n^{-1/m_1(H)}=p(n;H,\ell,0).$$

Indeed, if $p \ge C n^{-1/m_1(H)}$ for some constant $C$, Theorem~\ref{thm:onegraphthreshold}
shows that $G(n,p)$ itself will be $(H,\ell)_v$-Ramsey w.h.p., and hence this is an upper bound on the
perturbed vertex Ramsey threshold.

For the lower bound, take $G$ to be a complete balanced bipartite $n$-vertex 
graph with vertex classes $A$ and $B$.  
By Theorem~\ref{thm:onegraphthreshold}, if $p \le c n^{-1/m_1(H)}$ for some constant $c$, then with high 
probability both $G(n,p)[A]$ and $G(n,p)[B]$ are not $(H,2)_v$-Ramsey.  This therefore
implies there exists a  $4$-colouring of the
vertices of $G \cup G(n,p)$ without a monochromatic copy of $H$.
\end{obs}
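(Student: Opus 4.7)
The plan is to establish matching upper and lower bounds. The upper bound is immediate from Theorem~\ref{thm:onegraphthreshold}: when $q \geq C n^{-1/m_1(H)}$, the random graph $G(n,q)$ alone is already $(H,r)_v$-Ramsey w.h.p., so for any sequence $(G_n)$ of graphs the union $G_n \cup G(n,q)$ inherits this property. This gives $p(n;H,r,d) \leq n^{-1/m_1(H)}$ for every $d$, with no input from the perturbation.

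The substance is in the lower bound, where I must exhibit a concrete dense graph $G_n$ of density at least $d$ together with an explicit colouring strategy that w.h.p. avoids a monochromatic $H$. The key idea is to exploit the assumption $d \leq 1/2$ by letting $G_n$ be the complete balanced bipartite graph on classes $A, B$ of size $n/2$; its density is arbitrarily close to $1/2$, and crucially $G_n$ contributes no edges within $A$ or within $B$. Having at least $r \geq 4$ colours available, I will use only four of them: colours $1,2$ exclusively on $A$ and colours $3,4$ exclusively on $B$. Under such a colouring, any monochromatic copy of $H$ in colour $i \in \{1,2\}$ must lie entirely inside $A$, and since $G_n$ contains no edges in $A$, it must actually be a subgraph of the random graph $G(n,q)[A] \cong G(n/2,q)$; the analogous statement holds for colours $3,4$ and $B$.

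It remains to choose the 2-colourings of $A$ and $B$. Since $H$ is not a matching, I can apply the lower bound of Theorem~\ref{thm:onegraphthreshold} with $r = 2$ to the random graphs induced on $A$ and on $B$: provided $q \leq c n^{-1/m_1(H)}$ (absorbing the constant factor $2^{1/m_1(H)}$ arising from passing from $n$ to $n/2$ into a smaller $c$), w.h.p. neither $G(n,q)[A]$ nor $G(n,q)[B]$ is $(H,2)_v$-Ramsey. A union bound over the two sides provides with high probability the required $2$-colourings, which together yield an $r$-colouring of $G_n \cup G(n,q)$ with no monochromatic $H$.

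I do not anticipate any genuine obstacle; the argument is essentially a colour-splitting reduction to the two-colour random graph result. The role of the hypothesis $r \geq 4$ is exactly that Theorem~\ref{thm:onegraphthreshold} requires two colours per side, and the role of $d \leq 1/2$ is that the complete bipartite construction, which kills all non-random edges within each side, is only dense enough in this regime.
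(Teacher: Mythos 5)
Your proposal is correct and follows the same approach as the paper: the upper bound comes directly from Theorem~\ref{thm:onegraphthreshold} applied to $G(n,q)$ alone, and the lower bound uses the complete balanced bipartite graph with the colour palette split between the two sides, reducing to the $0$-statement of Theorem~\ref{thm:onegraphthreshold} with two colours on each part. Your write-up simply makes explicit a few details the paper leaves implicit, namely the assignment of colours $\{1,2\}$ to $A$ and $\{3,4\}$ to $B$, and the absorption of the constant factor when passing from $n$ to $n/2$.
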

Given $\ell,k \in \mathbb N$ with $\ell\geq 2k$, and any $d\in (0,1-1/k]$, notice that by considering
the complete balanced $k$-partite $n$-vertex graph, one can similarly conclude that
$p(n;H,\ell,d) =p(n;H,\ell,0)$ for any graph $H$ with at least one edge.

\subsection{Notation}
Throughout the paper we omit floors and ceilings whenever this
does not affect the argument. Further,
we use standard graph theory and asymptotic notation. In particular, for a graph $G$, $v(G)$ denotes the number of vertices of $G$ and $e(G)$ the number of edges of $G$;
note that we often condense this notation
to $v_G$ and $e_G$ respectively. 
We say a graph is \emph{nonempty} if it has a nonempty vertex set, and unless otherwise specified, we shall take the vertex set to be $[v_G] := \{1, 2, \hdots, v_G\}$.
Given a hypergraph $\bm{H}$ and a set $I\subset V(\bm{H})$, $\deg_{\bm{H}}(I)$ denotes the number of edges of $\bm{H}$ that contain the set $I$.

Given a set $X$ and $r \in \mathbb N$ we write
$\binom{X}{r}$ for the set of all subsets of
$X$ of size $r$. Similarly, if $V$ is a set of vertices and $F$ is a graph, we denote by $\binom{V}{F}$ the set of all possible copies of $F$ supported on vertices in $V$. Here we consider these  copies of $F$ to be distinct if they have distinct sets of edges, so $\card*{\binom{V}{F}} = \binom{\card{V}}{v_F} \frac{v_F!}{\aut(F)}$, where $\aut(F)$ is the number of automorphisms of $F$. We also use the notation $\binom{G}{F}$ to denote the set of copies of $F$ in a graph $G$.

\subsection{Organisation of the paper}
In Section~\ref{sec:proof} we prove Theorem~\ref{thm:main}, handling the $0$-statement in Section~\ref{sec:0-statement} and the $1$-statement in Section~\ref{sec:1-statement}. In the latter section, we shall require a `robust' version of the $1$-statement of Theorem~\ref{thm:kreuterthresholds} (Theorem~\ref{thm:robustKreuter1}), which we prove in Section~\ref{sec3}. Finally, in Section~\ref{sec:conc} we give some concluding remarks.

\section{Proof of the perturbed threshold} \label{sec:proof}

In this section we prove Theorem~\ref{thm:main}.  We present the arguments for the $0$- and $1$-statements in separate subsections below.

\subsection{The 0-statement} \label{sec:0-statement}

Here we will show the existence of a graph $G$ of density at least $d$ such that, when $p = o \left( n^{-1/m^*(K_r, H;k)} \right)$, with high probability the vertices of $G \cup G(n,p)$ can be two-coloured without a red copy of $K_r$ or a blue copy of $H$.

\subsubsection{Kreuter's Theorem for families} \label{sec:families}

In showing the existence of a good colouring, we will use the $0$-statement of Theorem~\ref{thm:kreuterthresholds}, which shows that $G(n,p)$ can be vertex-coloured while avoiding monochromatic subgraphs.  However, in our application, we will have to avoid several subgraphs in the same colour class, and therefore need the following generalisation of Theorem~\ref{thm:kreuterthresholds} to families of graphs.

\begin{prop} \label{prop:0statementforfams}
Let $\cF$ and $\cH$ be two finite families of nonempty graphs, and let 
\[m=m_K(\cF,\cH):=\min_{F\in \cF, H\in \cH} \beta(F,H).\]
If $p = o \left( n^{-1/m} \right)$, then with high probability there is a red/blue-colouring of the vertices of $G(n,p)$ without a red copy of any graph $F\in \cF$ and without a blue copy of any $H\in \cH$. 
\end{prop}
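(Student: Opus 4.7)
The approach is to adapt the proof of the $0$-statement of Kreuter's Theorem~\ref{thm:kreuterthresholds} from a single pair of graphs to the family setting. Fix a pair $(F^*, H^*) \in \cF \times \cH$ attaining the minimum $\beta(F^*, H^*) = m$; after possibly swapping the roles of the two families, we may assume $m_1(F^*) \le m_1(H^*)$, so that $m = m_K(F^*, H^*)$, and let $J^* \subseteq H^*$ be a subgraph witnessing the maximum in Definition~\ref{kreuterdensities}, so $m = (m_1(F^*) + e_{J^*})/v_{J^*}$. A useful first remark is that, since $m = \min_{F,H} \beta(F,H)$, every pair $(F,H) \in \cF \times \cH$ satisfies $\beta(F,H) \ge m$, and hence $p = o(n^{-1/m})$ lies below the Kreuter threshold $n^{-1/\beta(F,H)}$ for \emph{every} pair simultaneously.

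Kreuter's argument for a single pair proceeds by a structural/probabilistic construction: below threshold, the copies of the densest subgraph $J^*$ of $H^*$ intersect only a sublinear set of vertices of $G(n,p)$, so one can colour those ``bad'' vertices red and the remaining vertices blue. The red side then contains no copy of $F^*$ (because it is too sparse/small to support one, via the symmetric $0$-statement of Theorem~\ref{thm:onegraphthreshold}), and the blue side contains no copy of $H^*$ (because every copy of $H^*$ must meet the bad set, now coloured red, since it contains a copy of $J^*$). To lift this to families, we choose, for each $H \in \cH$, a densest subgraph $J_H \subseteq H$ that, together with $F^*$, plays the role of $J^*$, and colour red every vertex lying in a copy of some $J_H$. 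Symmetric considerations identify a collection of subgraphs $I_F \subseteq F$ for each $F \in \cF$ to be covered by blue vertices.

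There are two things to verify. First, the red set obtained this way has size $o(n)$ w.h.p., which follows from first-moment bounds on the number of copies of each $J_H$ in $G(n,p)$ when $p = o(n^{-1/m})$. Second, the red set contains no copy of any $F \in \cF$; this is where we invoke the symmetric $0$-statement of Theorem~\ref{thm:onegraphthreshold} applied to each $F \in \cF$, using that $p$ lies below the $1$-density threshold $n^{-1/m_1(F)}$, which in turn follows from a direct density chase using $\beta(F,H^*) \ge m$ and the definition of $\beta$.

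The principal obstacle is the simultaneous bookkeeping: one must verify that the single value of the parameter $m = \min_{F,H}\beta(F,H)$ controls the first-moment estimates for all the graphs $J_H$ and the sparseness bounds for all the graphs $F \in \cF$ at once. The case analysis built into the definition of $\beta(F,H)$ (depending on which of $m_1(F), m_1(H)$ is larger, and on whether either graph is edgeless) must be unpacked to handle all pairs in $\cF \times \cH$ uniformly. I expect that once the extremal pair $(F^*, H^*)$ is identified, the minimality of $\beta(F^*, H^*) = m$ ensures that all other pairs are ``strictly easier'' in the sense required, so that Kreuter's single-pair argument goes through mutatis mutandis with a union bound over the finite families.
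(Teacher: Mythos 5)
Your description of Kreuter's $0$-statement proof is incorrect, and this invalidates the reduction you propose. You suggest that Kreuter's argument colours red every vertex that lies in a copy of the densest subgraph $J^*$ of $H^*$, then appeals to the $0$-statement of Theorem~\ref{thm:onegraphthreshold} to conclude that the red set supports no copy of $F^*$. There are two problems. First, this direct colouring fails even in the simplest symmetric case: take $F^* = H^* = K_3$, so $J^* = K_3$. Colouring red every vertex that lies in some triangle makes \emph{every} triangle entirely red, and hence every copy of $F^*$ is monochromatic red. The red set being of size $o(n)$ does not help --- it is precisely the densest, most structured part of $G(n,p)$, not a sparse one. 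Second, even if the red set happened to avoid $F^*$, the $0$-statement of Theorem~\ref{thm:onegraphthreshold} is not the tool you need: that theorem asserts that $G(n,p)$ \emph{admits} a good colouring, whereas you have already fixed a specific colouring and would need to show the red induced subgraph simply has no copy of $F^*$ (an appearance statement, not a Ramsey statement). Additionally, you mention colouring blue the vertices of some subgraphs $I_F$ of each $F$, which would conflict with the red colouring of the $J_H$'s whenever a vertex lies in both, and you do not address how to resolve this.

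Kreuter's actual argument, and the paper's adaptation of it, is an indirect one: assume for contradiction that $G(n,p)$ cannot be two-coloured as desired, pass to a vertex-minimal ``critical'' subgraph, and run a gluing procedure (``Procedure Hypertree'') that iteratively attaches copies of graphs from $\cF$ or $\cH$, each sharing exactly one vertex with what has been built so far. One tracks a function $f(i)$ controlling the expected count of the partial structure, and shows that every possible output of the procedure is a graph that is too dense to appear below the threshold. Adapting this to families requires noting that at each gluing step the algorithm may attach any member of the relevant family, and that attaching a graph denser than the one in the minimising pair for $m$ only decreases $f$ further, so the union bound over the (finitely many) possible extension choices still closes. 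This is what the paper sketches. You also do not separate out the degenerate cases (a family containing an edgeless graph, or both families containing matchings), which the paper handles directly before invoking Kreuter's machinery; these cases are where $\beta$ is defined via $m(\cdot)$ or equals $1$, and the critical-subgraph argument does not apply.
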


We remark that Proposition~\ref{prop:0statementforfams} is tight, since if $p = \omega \left(n^{-1/m} \right)$, then with high probability $G(n,p)$ is $(F,H)_v$-Ramsey by Theorem~\ref{thm:kreuterthresholds}, where $(F,H)$ is the minimising pair in the definition of $m$. 

The proof of Proposition~\ref{prop:0statementforfams} is nearly identical to the proof of the $0$-statement of Theorem~\ref{thm:kreuterthresholds}; that is, the case when both $\mathcal F$ and $\mathcal H$
each contain a single graph.
We therefore simply sketch the key idea here and refer the reader to \cite{kreuter1996threshold} for the details.

\smallskip

First we handle the degenerate cases.  Suppose one of the families, say $\mc F$, contains a graph $F$ with no edges. The parameter $\beta(F,H)$ is then the appearance threshold for the graph $H$ in $G(n,p)$, and so if $p = o \left(n^{-1/m}\right)$, we have that with high probability $G(n,p)$ has no copy of any graph from $\mc H$, and so we can colour all its vertices blue.  The other degenerate case is when all graphs in $\mc F$ and $\mc H$ have edges, but both families contain matchings, say $F$ and $H$.  In this case, $\beta(F,H) = 1$.  If $p = o \left( n^{-1} \right)$, then with high probability $G(n,p)$ is bipartite.  We can thus two-colour its vertices such that each colour class is an independent set, and thus has no copy of any graph from $\mc F$ or $\mc H$.

We may therefore assume that every graph in $\mc F$ has edges, and that $\mc H$ does not contain a matching, bringing us to the setting of Theorem~\ref{thm:kreuterthresholds}. The proof now follows a similar scheme to other $0$-statement proofs in random Ramsey settings, e.g.\ \cite{kohayakawa1997threshold}. One begins by supposing for a contradiction that you cannot red/blue-colour the graph $G(n,p)$ avoiding  red copies of graphs in $\cF$ and blue copies of graphs in $\cH$. Using this fact one can define some set $\cG$ of graphs obtained by `gluing together' copies of graphs in $\cF$ and in $\cH$ in certain ways, and show that $G(n,p)$ must contain a graph in $\cG$. In \cite{kreuter1996threshold}  $\cG$ is defined by way of an algorithm that finds a copy of some $G\in \cG$ in $G(n,p)$. In order to do this they pass to a `critical' subgraph $G'$ of $G(n,p)$ which is minimal (in terms of copies of graphs in $\cF$ and $\cH$) with respect to the property of not being able to $2$-colour $G'$ avoiding red copies of graphs in $\cF$ and blue copies of graphs in $\cH$. In $G'$, one can see that for every copy $T$ of a graph in $\cF$ or $\cH$ and every vertex $v$ of $T$, there is a copy of a graph in the other family which intersects $T$ exactly at the vertex $v$ \cite{kreuter1996threshold}*{Claim 1}. The algorithm \cite{kreuter1996threshold}*{Procedure Hypertree} which builds a subgraph $J$ of $G'$ is then defined by repeatedly adding copies of some $F\in \cF$ or $H\in \cH$, so that the copy intersects the previous copy in exactly one vertex. The proof  then works by analysing this procedure and the graphs in $\cG$ that can be found using this procedure. In particular, one keeps track of a function $f(i)$ which controls the exponent of the expected number of $J_i$, where $J_i$ is the graph found in $G'$ after $i$ steps of the algorithm. The procedure will stop if the $f(i)$ gets too small or if the procedure continues for roughly $\log n$ steps.  This will lead to a contradiction, as the graphs in $\cG$ which are the possible outcomes of this procedure are all dense graphs and are either large or satisfy a very strong density condition \cite{kreuter1996threshold}*{Claim 5} and hence are very unlikely to occur in $G(n,p)$ at this density. One can also bound the size of $\cG$ \cite{kreuter1996threshold}*{Claim 6} so that a union bound will guarantee that with high probability no such graph in $\cG$ is found in $G(n,p)$. In the calculations involved in the analysis \cite{kreuter1996threshold}*{Claims 2,3 and 4} of the effect on $f$ in each step of the algorithm, there are some minor changes in our setting as we have to consider the possibility of any member of our family being added by the algorithm to update the $J_i$. However, it can be seen that adding a `denser' graph than the graph which is in the minimising pair for the definition of $m$ will only help the situation, in that the function $f$ can only decrease further, meaning that the resulting $J_i$ is at most as likely to appear in $G(n,p)$ as the $J_i$ obtained by adding the minimising graph as in the calculations in \cite{kreuter1996threshold}.

\subsubsection{Proof of the 0-statement}

We may assume that $m^*(K_r, H; k) > 0$, as otherwise there is no $0$-statement to prove. We take the dense graph $G_n$ to be the balanced complete $k$-partite graph on $n$ vertices, which has density at least $1 - \tfrac{1}{k}$, and let $V_1, V_2, \hdots, V_k$ be the $k$ vertex classes.

Let $(r_1, \hdots, r_k)$ be the maximising vector in Definition~\ref{def:mstar}.  For each $i \in [k]$, we define a family of nonempty subgraphs of $H$ by
\[ \mc H_i = \left\{ H' : \emptyset \neq H' \subseteq H, \, \beta(K_{r_i+1}, H') \ge m^*(K_r, H; k) \right\}. \]

\begin{clm}
For each $i$, $H \in \mc H_i$.
\end{clm}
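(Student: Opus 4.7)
The proof should be very short, essentially unwrapping the definition of $m^*(K_r,H;k)$ applied to a trivial partition of $H$. The plan is as follows.

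Fix some $i \in [k]$. To show $H \in \mc H_i$, we need to check that $\beta(K_{r_i+1}, H) \ge m^*(K_r, H; k)$. Since the definition of $m^*$ allows (possibly empty) induced subgraphs in the partition, I would consider the specific partition $H_1 \cup \dots \cup H_k = H$ obtained by setting $H_i := H$ and $H_j := \emptyset$ for all $j \ne i$. On this partition, the only non-empty part is the $i$th one, so
\[\max_{j : H_j \ne \emptyset} \beta(K_{r_j+1}, H_j) = \beta(K_{r_i+1}, H).\]

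Since the inner minimum in Definition~\ref{def:mstar} is taken over all partitions, evaluating it at this particular partition yields an upper bound:
\[\min_{H_1 \cup \dots \cup H_k = H} \max_{j : H_j \ne \emptyset} \beta(K_{r_j+1}, H_j) \le \beta(K_{r_i+1}, H).\]
By the choice of $(r_1, \dots, r_k)$ as the vector attaining the outer maximum, the left-hand side equals $m^*(K_r, H; k)$, and the claim follows.

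There is no real obstacle here; the only subtlety to flag is that one needs the partition convention to permit empty parts (as set up in footnote~4 of Definition~\ref{def:m}), so that concentrating all of $H$ in a single block is a legitimate choice.
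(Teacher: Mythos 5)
Your proof is correct and takes essentially the same approach as the paper: both consider the trivial partition that concentrates all of $H$ into the $i$th block, and then invoke that $(r_1,\dots,r_k)$ is the maximising vector so that the inner minimum equals $m^*(K_r,H;k)$. The paper phrases it as a proof by contradiction, while you argue directly, but the mathematical content is identical.
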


\begin{proof}
Suppose $H \notin \mc H_j$ for some $j \in [k]$, and consider the partition $H = H_1 \cup H_2 \cup \hdots \cup H_k$, where $H_i = H$ if $i = j$ and $H_i = \emptyset$ otherwise.  We then have 
\[ \max_{i : H_i \neq \emptyset} \beta(K_{r_i + 1}, H_i) = \beta(K_{r_j + 1}, H) < m^*(K_r, H; k), \]
which contradicts the definition of $m^*(K_r, H; k)$.
\end{proof}

In particular, each of the families $\mc H_i$ is nonempty.  We can now describe, for each $i \in [k]$, our colouring of the vertices in $V_i$. Let $\mc F = \{ K_{r_i + 1} \}$ and $\mc H = \mc H_i$. By definition of $\mc H_i$, we have $\min\limits_{F \in \mc F, H' \in \mc H_i} \beta(F, H') \ge m^*(K_r, H; k)$. Since $p= o\left(n^{-1/m^*(K_r, H; k)} \right)=o\left(|V_i|^{-1/m^*(K_r, H; k)} \right)$, it follows from Proposition~\ref{prop:0statementforfams} that with high probability we can colour $V_i$ such that $G(n,p)[V_i]$ has neither a red $K_{r_i+1}$ nor a blue graph from $\mc H_i$.  The following claim shows this gives a valid colouring of $G_n \cup G(n,p)$, completing the proof of the $0$-statement.

\begin{clm}
With this colouring, $G_n \cup G(n,p)$ has neither a red $K_r$ nor a blue $H$.
\end{clm}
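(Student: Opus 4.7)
The plan is to handle the two cases (red $K_r$ and blue $H$) separately, exploiting the structure of $G_n$ as a complete $k$-partite graph. The crucial observation is that $G_n$ contains no edges inside any part $V_i$, so every edge of a monochromatic copy that lies within a single $V_i$ must come from $G(n,p)[V_i]$. This will let us translate a would-be monochromatic copy into a forbidden configuration in some $G(n,p)[V_i]$, contradicting the colouring we produced via Proposition~\ref{prop:0statementforfams}.

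For the red $K_r$, I would start by supposing a red copy of $K_r$ exists and letting $n_i$ denote the number of its vertices lying in $V_i$, so that $n_1 + \dots + n_k = r$. The edges of the clique within $V_i$ span a red $K_{n_i}$ entirely inside $G(n,p)[V_i]$. Since our colouring of $V_i$ avoids a red $K_{r_i+1}$, we must have $n_i \le r_i$ for every $i$. Summing gives $r = \sum_i n_i \le \sum_i r_i \le r-1$, a contradiction.

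For the blue $H$, suppose such a copy exists and let $H_i$ be the subgraph of $H$ induced on the vertices mapped into $V_i$; this produces an induced partition $H = H_1 \cup \dots \cup H_k$ exactly of the type considered in Definition~\ref{def:mstar}. As before, all edges of $H$ internal to $V_i$ lie in $G(n,p)[V_i]$, and they are all blue, so $G(n,p)[V_i]$ contains a blue copy of $H_i$ whenever $H_i$ is nonempty. Applying the minimax formula with the maximising tuple $(r_1, \dots, r_k)$, there must exist some index $i$ with $H_i \neq \emptyset$ and $\beta(K_{r_i+1}, H_i) \ge m^*(K_r, H;k)$; by definition this means $H_i \in \mc H_i$. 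But our colouring of $V_i$ was chosen precisely so that $G(n,p)[V_i]$ contains no blue copy of any graph in $\mc H_i$, contradiction.

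The only genuinely delicate point to verify is the second case: that the partition of $V(H)$ induced by the $V_i$'s really yields an \emph{induced} subgraph partition of the abstract graph $H$ (so that we are entitled to apply the minimax in Definition~\ref{def:mstar}), and that the blue edges witnessing each $H_i$ really come from $G(n,p)[V_i]$ rather than from $G_n$. Both follow immediately from $G_n$ being $k$-partite with parts $V_1, \dots, V_k$, but it is worth spelling out carefully since the whole argument hinges on matching the combinatorial definition of $m^*(K_r, H; k)$ to the geometric partition coming from $G_n$.
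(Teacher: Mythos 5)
Your proof is correct and follows essentially the same argument as the paper: both cases reduce, via the $k$-partiteness of $G_n$, to forbidden red/blue configurations inside some $G(n,p)[V_i]$, exactly as Proposition~\ref{prop:0statementforfams} was used to preclude. The extra care you take in spelling out that intra-part edges must come from $G(n,p)$ and that the induced partition of $V(H)$ matches Definition~\ref{def:mstar} is sound but amounts to the same observation the paper makes implicitly.
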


\begin{proof}
By construction, the largest red clique in $V_i$ is of order at most $r_i$.  The largest red clique in $V(G_n \cup G(n,p)) = \cup_i V_i$ therefore has at most $\sum_i r_i \le r-1$ vertices, and hence the colouring is red-$K_r$-free.

Suppose there was a blue copy of $H$, and let $H = H_1 \cup \hdots \cup H_k$ be the partition of $H$ induced by the parts $V_i$. By definition of $m^*(K_r, H; k)$, there is some part $i \in [k]$ with $\beta(K_{r_i + 1}, H_i) \ge m^*(K_r, H; k)$ (and $H_i \neq \emptyset$). It then follows that $H_i \in \mc H_i$, but our colouring of $G(n,p)[V_i]$ avoids blue copies of any graph in $\mc H_i$, contradicting the existence of this blue copy of $H$.
\end{proof}

\subsection{The 1-statement} \label{sec:1-statement}

To prove the $1$-statement of Theorem~\ref{thm:main}, we need to show that whenever $p = \omega \left(n^{-1/m^*(K_r, H; k)} \right)$, $G_n \cup G(n,p)$ will with high probability be $(K_r, H)_v$-Ramsey. When $G_n$ is the complete $k$-partite graph, as it was in the proof of the $0$-statement, this amounts to finding the sparse parts of the graphs in $G(n,p)[V_i]$, which can then be joined together since we have a complete $k$-partite graph.

However, in our more general setting, $G_n$ is an arbitrary graph of density $d > 1 - 1/(k-1)$.  By employing Szemer\'edi's Regularity Lemma~\cite{szemeredi}, we shall find some structure in $G_n$ that mimics the behaviour of a complete $k$-partite graph.  These structural results, together with probabilistic tools concerning the random graph $G(n,p)$, are collected in the following subsections, before being used in the proof of Theorem~\ref{thm:main} in Sections~\ref{sec:1statementsketch} and~\ref{sec:1statementproof}.

\subsubsection{Structure in dense graphs} \label{sec:regularity}

Our application of the Regularity Lemma follows the standard lines.  We present here the necessary definitions and properties of regular pairs, referring the reader to the survey of Koml\'os and Simonovits~\cite{komsim} for further details.

\begin{dfn}
Given $\eps > 0$, a graph $G$ and two disjoint vertex sets $A, B \subset V(G)$, the pair $(A,B)$ is $\eps$-regular if for every $X \subseteq A$ and $Y \subseteq B$ with $\card{X} > \eps \card{A}$ and $\card{Y} > \eps \card{B}$, we have $\card{d(X,Y) - d(A,B)} < \eps$, where $d(S,T) := e(S,T)/(\card{S}\card{T})$ for any vertex sets $S$ and $T$. 
\end{dfn}

In essence, the edges between a regular pair `look random', in the sense that they are very well distributed.  The next lemma showcases some beneficial properties of these regular pairs: small sets of vertices typically have many common neighbours, and subsets of regular pairs inherit a large degree of regularity.  We omit the proofs of these facts, which can be found in~\cite{komsim}.

\begin{lem} \label{lem:regproperties}
Let $(A,B)$ be an $\eps$-regular pair in a graph $G$ with $d(A,B) = d$.
\begin{enumerate}[label=(\roman*)]
    \item If $\ell \ge 1$ and $(d - \eps)^{\ell-1} > \eps$, then
    \[ \card*{\left\{ (x_1, \hdots, x_\ell) \in A^{\ell} : \card{\cap_i N(x_i) \cap B} \le (d - \eps)^\ell \card{B} \right\} } \le \ell \eps \card{A}^\ell. \]
    \item If $\gamma > \eps$, and $A' \subset A$ and $B' \subset B$ satisfy $\card{A'} \ge \gamma \card{A}$ and $\card{B'} \ge \gamma \card{B}$, then $(A', B')$ is an $\eps'$-regular pair of density $d'$, where $\eps' := \max \{ \eps/ \gamma, 2 \eps \}$ and $\card{d' - d} < \eps$.
\end{enumerate}
\end{lem}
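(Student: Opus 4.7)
The plan is to prove part~(ii) first and then to use a similar strategy, combined with induction on $\ell$, for part~(i). Both statements are standard consequences of the definition of regularity, so the proofs amount to a careful chasing of inequalities.

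For part~(ii), I would first derive the density bound $|d' - d| < \eps$ by plugging $X = A'$ and $Y = B'$ directly into the regularity condition for $(A,B)$; this is admissible since $|A'| \geq \gamma|A| > \eps|A|$ and similarly for $B'$. Then, for arbitrary test sets $X \subseteq A'$ with $|X| > \eps'|A'|$ and $Y \subseteq B'$ with $|Y| > \eps'|B'|$, the inequality $\eps' \geq \eps/\gamma$ combined with $|A'| \geq \gamma|A|$ forces $|X| > \eps|A|$, and similarly $|Y| > \eps|B|$. Regularity of $(A,B)$ then yields $|d(X,Y) - d| < \eps$, and combining this with the already established $|d' - d| < \eps$ via the triangle inequality gives $|d(X,Y) - d'| < 2\eps \leq \eps'$, as required.

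For part~(i), I would proceed by induction on $\ell$. The base case $\ell = 1$ says at most $\eps|A|$ vertices $x \in A$ satisfy $|N(x) \cap B| \leq (d - \eps)|B|$; if more did, then the set of such vertices would be an admissible test set witnessing $d(X,B) \leq d - \eps$, contradicting regularity. For the inductive step, I would split the bad $\ell$-tuples according to whether their prefix $(x_1, \ldots, x_{\ell-1})$ is itself bad. The inductive hypothesis handles the bad-prefix case, contributing at most $(\ell-1)\eps|A|^\ell$ tuples. If the prefix is good, then the numerical hypothesis $(d-\eps)^{\ell-1} > \eps$ ensures $B' := \bigcap_{i < \ell} N(x_i) \cap B$ has size $> \eps|B|$ and is therefore an admissible test set in the pair $(A,B)$. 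Applying the base-case reasoning with $B'$ in place of $B$ shows that at most $\eps|A|$ choices of $x_\ell$ satisfy $|N(x_\ell) \cap B'| \leq (d - \eps)|B'| < (d-\eps)^\ell |B|$, contributing a further $\eps|A|^\ell$. Adding the two contributions gives the claimed bound $\ell \eps |A|^\ell$.

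The only genuinely subtle point is making sure, at each step of the induction in (i), that the intermediate common-neighbourhood set $B'$ remains large enough to be an admissible test set against $A$ in the regularity condition for $(A,B)$; this is precisely what the numerical hypothesis $(d-\eps)^{\ell-1} > \eps$ guarantees. Everything else is bookkeeping with the regularity definition and the triangle inequality, so I do not anticipate serious obstacles.
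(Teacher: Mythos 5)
Your proof is correct. The paper itself does not prove Lemma~2.9 --- it refers the reader to the Koml\'os--Simonovits survey --- and the argument you give is precisely the standard one found there: part~(ii) follows from the observation that $\eps' \ge \eps/\gamma$ makes test sets in $(A',B')$ admissible test sets in $(A,B)$, with the $2\eps$ term absorbing the shift from $d$ to $d'$; and part~(i) is the usual induction, splitting bad $\ell$-tuples by whether the prefix is already bad, with the hypothesis $(d-\eps)^{\ell-1}>\eps$ keeping the intermediate common neighbourhood $B'$ large enough to serve as a test set (and also ensuring $(d-\eps)^{\ell-2}>\eps$, so the inductive hypothesis is available). No gaps.
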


Szemer\'edi's Regularity Lemma then famously asserts that the vertices of any sufficiently large graph can be partitioned into a large but bounded number of parts, such that almost all pairs of parts are $\eps$-regular. We shall not require the full strength of the Regularity Lemma, but only the following corollary, which follows in combination with Tur\'an's Theorem~\cite{turan}.

\begin{prop} \label{prop:regularity}
For every $k \ge 2$ and $\alpha, \eps > 0$ with $\alpha \ge 6 \eps$, there is some $\eta := \eta(k, \alpha, \eps) > 0$ and $n_0 := n_0(k, \alpha, \eps)$ such that, if $n \ge n_0$ and $G$ is an $n$-vertex graph of density at least $1 - 1/(k-1) + 2 \alpha$, then there are pairwise disjoint vertex sets $V_1, \hdots, V_k \subset V(G)$ with $\card{V_1} = \hdots = \card{V_k} \ge \eta n$ such that, for each $1 \le i < j \le k$, the pair $(V_i, V_j)$ is $\eps$-regular of density at least $\alpha$.
\end{prop}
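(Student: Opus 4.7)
The plan is a standard application of Szemer\'edi's Regularity Lemma combined with Tur\'an's theorem. First I would invoke the equitable version of the Regularity Lemma with regularity parameter $\eps$ and a sufficiently large lower bound $L_0 = L_0(k,\alpha,\eps)$ on the number of parts, obtaining a partition $V(G) = U_0 \cup U_1 \cup \cdots \cup U_L$ with $L_0 \le L \le M_0 = M_0(\eps, L_0)$, an exceptional set of size $|U_0| \le \eps n$, equal cluster sizes $m := |U_i| \ge (1-\eps)n/L$ for $i \ge 1$, and all but at most $\eps \binom{L}{2}$ of the pairs $(U_i, U_j)$ being $\eps$-regular.

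Next I would form the reduced graph $R$ on vertex set $[L]$ by putting an edge $ij$ exactly when $(U_i, U_j)$ is $\eps$-regular with density at least $\alpha$. Edges of $G$ not accounted for by $R$-edges fall into four categories: those inside a cluster (at most $L\binom{m}{2} \le n^2/(2L)$), those incident to $U_0$ (at most $\eps n^2$), those in irregular pairs (at most $\eps \binom{L}{2}m^2$), and those in $\eps$-regular pairs of density less than $\alpha$ (strictly less than $\alpha \binom{L}{2} m^2$). Using $Lm \le n$ to bound $\binom{L}{2}m^2/\binom{n}{2} \le 1 + o(1)$, subtracting these contributions from $e(G) \ge (1 - 1/(k-1) + 2\alpha)\binom{n}{2}$ and dividing by $\binom{L}{2}m^2$ yields, after a short calculation,
\[
d(R) \;\ge\; 1 - \frac{1}{k-1} + \alpha - 3\eps - \frac{1}{L} - o(1).
\]
The assumption $\alpha \ge 6\eps$ gives $\alpha - 3\eps \ge \alpha/2$, so choosing $L_0 > 8/\alpha$ ensures $d(R) > 1 - 1/(k-1) + \alpha/4$.

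Tur\'an's theorem then guarantees a copy of $K_k$ in $R$, and taking $V_1, \dots, V_k$ to be the clusters indexed by its vertices immediately yields the conclusion: the sets are pairwise disjoint, each of common cardinality $m \ge (1-\eps)n/M_0$, and each pair $(V_i, V_j)$ is $\eps$-regular of density at least $\alpha$ by construction. Setting $\eta := (1-\eps)/M_0$ completes the proof.

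The main obstacle is simply the routine book-keeping of densities and error terms: one must choose $L_0$ (depending on $k,\alpha,\eps$) large enough that the $1/L$ error coming from within-cluster edges, together with the $\eps$-losses from the exceptional set and the irregular pairs, can all be absorbed into the slack $2\alpha$ of the density hypothesis while still leaving room to exceed the Tur\'an threshold $1 - 1/(k-1)$. The condition $\alpha \ge 6\eps$ is exactly what supplies the required margin.
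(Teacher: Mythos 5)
Your proof is correct and takes exactly the approach the paper intends: the paper does not supply a proof but simply notes that the proposition ``follows in combination with Tur\'an's Theorem'' from the Regularity Lemma, and your regularity partition plus reduced graph plus Tur\'an argument is the standard realisation of that. (A remark: the precise coefficient of $\eps$ in your reduced-graph density bound is a little understated --- the $\eps n^2$ edges meeting $U_0$ contribute roughly $2\eps$, not $\eps$, after dividing by $\binom{L}{2}m^2 \approx n^2/2$, so the loss is closer to $4\eps$ than $3\eps$ --- but since $\alpha \ge 6\eps$ there is more than enough slack and the conclusion is unaffected.)
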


\subsubsection{Probabilistic tools} \label{sec:probtools}
While Proposition~\ref{prop:regularity} gives us the desired structure in the dense graph, we also require a couple of results about the random graph.  The first of these counts the number of copies of a fixed graph $H$ in $G(n,p)$. Following~\cite{randomgraphbook}, we define the following parameter for  $H$ and $p=p(n)$,
\begin{equation} \label{eqn:Phidefn} \Phi(H,p)=\Phi_{H,p}:=\min_{J\subseteq H, e_J>0 }n^{v_J}p^{e_J}.
\end{equation}
The lemma below shows that we are very unlikely to have significantly fewer copies of $H$ than expected.

\begin{lem}[Janson's inequality]  \label{lem:vanillajanson}

 Let $H$ be a nonempty graph, $p=p(n)$ and $\pzc{H}\subseteq \binom{[n]}{H}$ be some family of $\Omega(n^{v_H})$  potential copies of $H$ on $[n]$. Letting $X$ be the random variable that counts the number copies of $H$ in $\pzc{H}$  which appear in $G(n,p)$, we have that \[\mathbb{P}[X\leq 3\mathbb{E}[X]/4]\leq \exp (-\Omega(\Phi_{H,p})). \]
 \end{lem}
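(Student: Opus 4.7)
\medskip

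The plan is to apply the standard (lower-tail) Janson inequality to the indicator events $A_{H'} = \{E(H')\subseteq E(G(n,p))\}$ for $H'\in\pzc{H}$, so that $X=\sum_{H'\in \pzc{H}}\mathbf{1}_{A_{H'}}$. Writing $\mu=\mathbb{E}[X]$ and
\[
\Delta \;:=\; \sum_{\{H',H''\}:\, E(H')\cap E(H'')\neq\emptyset} \mathbb{P}[A_{H'}\cap A_{H''}],
\]
Janson's inequality yields $\mathbb{P}[X\le \mu - t]\le \exp\!\bigl(-t^{2}/(2(\mu+\Delta))\bigr)$ for every $0\le t\le\mu$. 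Taking $t=\mu/4$, it suffices to show that $\mu^{2}/(\mu+\Delta)=\Omega(\Phi_{H,p})$; equivalently, that both $\mu=\Omega(\Phi_{H,p})$ and $\mu^{2}/\Delta=\Omega(\Phi_{H,p})$.

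For the first bound, since $|\pzc{H}|=\Omega(n^{v_H})$ and each fixed copy is present with probability $p^{e_H}$, we get $\mu=\Omega(n^{v_H}p^{e_H})$. Choosing $J=H$ in the definition of $\Phi_{H,p}$ shows $\Phi_{H,p}\le n^{v_H}p^{e_H}$, so $\mu=\Omega(\Phi_{H,p})$. For $\Delta$, group the overlapping pairs $\{H',H''\}$ according to the isomorphism type of the intersection $H'\cap H''$, which is a subgraph $J\subseteq H$ with $e_J\ge 1$. For a fixed such $J$, there are at most $O(n^{2v_H - v_J})$ ordered pairs of potential copies of $H$ on $[n]$ whose intersection is isomorphic to $J$, and each contributes probability $p^{2e_H - e_J}$. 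Therefore
\[
\Delta \;=\; O\!\left(\sum_{J\subseteq H,\, e_J\ge 1} n^{2v_H - v_J}\, p^{2e_H - e_J}\right) \;=\; O\!\left(n^{2v_H}p^{2e_H} \sum_{J\subseteq H,\, e_J\ge 1} \frac{1}{n^{v_J}p^{e_J}}\right),
\]
and since $H$ has only a bounded (constant) number of subgraph types, the sum is $O(1/\Phi_{H,p})$. Combined with $\mu = \Theta(n^{v_H}p^{e_H})$, this gives $\Delta = O(\mu^{2}/\Phi_{H,p})$, so $\mu^{2}/\Delta=\Omega(\Phi_{H,p})$.

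Putting these estimates together yields $\mu^{2}/(\mu+\Delta)=\Omega(\Phi_{H,p})$, and Janson's inequality then gives the claimed bound. There is no serious obstacle here: the argument is the standard Janson calculation for subgraph counts, and the only mild point is to check that restricting from all copies of $H$ in $[n]$ to the subfamily $\pzc{H}$ does not affect the lower bound on $\mu$, which is immediate from the hypothesis $|\pzc{H}|=\Omega(n^{v_H})$ (the upper bound on $\Delta$ is only improved by passing to a subfamily).
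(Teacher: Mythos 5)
Your proposal is correct and follows essentially the same route as the paper: the paper invokes Janson's inequality in the form $\mathbb{P}[X\le 3\mathbb{E}[X]/4]\le \exp(-\mathbb{E}[X]^2/(32\Delta))$ from \cite{randomgraphbook}*{Theorem 2.14} and then cites the bound $\Delta=O(\mathbb{E}[X]^2/\Phi_{H,p})$ from \cite{randomgraphbook}*{Theorem 3.9}, whereas you spell out that calculation by grouping overlapping pairs according to the intersection subgraph $J\subseteq H$. The two are the same argument up to which version of the Janson bound one quotes and whether the $\Delta$ estimate is re-derived or cited.
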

 
 The proof of this lemma
 follows almost immediately from the main result of \cite{janson1990poisson} (see also 
 \cite{randomgraphbook}*{Theorem 2.14}). Indeed, for each potential copy $S\in \pzc{H}$ of 
 $H$, let $X_S$ be the indicator random variable for the event that $S$ appears in 
 $G(n,p)$. Then $X=\sum_{S\in\pzc{H}}X_S$ and  \cite{randomgraphbook}*{Theorem 2.14} implies
 that \[\mathbb{P}[X\leq3 \mathbb{E}[X]/4] \leq \exp 
 \left(-\frac{\mathbb{E}[X]^2}{32\Delta}\right),\]
 with \[\Delta:=\sum_{(S,S')\in \pzc{H}^2:E(S)\cap E(S')\neq \emptyset}\mathbb{E}[X_S X_{S'}].\]
 Lemma \ref{lem:vanillajanson} then follows upon noticing that $\Delta=O\left(\mathbb{E}[X]^2/\Phi_{H,p}\right)$ as done, for example,  in \cite{randomgraphbook}*{Theorem 3.9}.
 
Second, we shall make use of the vertex Ramsey properties of $G(n,p)$. However, a couple of complications arise in our application. We shall need the monochromatic subgraphs we find to interact well with the deterministic base graph, and shall therefore require them to be suitably well-located. Moreover, we will need to appeal to the Ramsey properties of the random graph over various vertex subsets. The following theorem is thus a `robust' version of the $1$-statement of Theorem~\ref{thm:kreuterthresholds}, which guarantees the existence of `good' monochromatic copies of subgraphs, and shows that the Ramsey properties hold with sufficiently high probability to be applied several times.

\begin{thm}\label{thm:robustKreuter1}
Let  $F$ and $H$ be graphs with $0 < m_1(F) \le m_1(H)$. Then there exist $\delta_0, c>0$ such that for all $0<\delta<\delta_0$, $t=t(n)\leq \exp(n^c)$ and $\eta_0>0$,
there exists a $C>0$ such that the following holds.
Suppose that \[\bfc{U}=\left\{(U_i,\pzc{F}_i,\pzc{H}_i):i\in[t] \right\}\]
is a collection of triples such that for each $i$, $U_i\subseteq [n]$ with $|U_i|\geq \eta_0 n$, $\pzc{F}_i\subseteq\binom{U_i}{F}$ with $\card*{\binom{U_i}{F} \setminus \pzc{F}_i} \leq \delta |U_i|^{v_F}$ and $\pzc{H}_i\subseteq \binom{U_i}{H}$ with $\card*{\binom{U_i}{H} \setminus \pzc{H}_i}\leq \delta |U_i|^{v_H}$. Then, if $p\geq Cn^{-1/m_K(F,H)}$, the following holds with high probability in $G(n,p)$.  For any two-colouring of $[n]$ and every $i\in[t]$, there is either a red copy $S\in \pzc{F}_i$ of $F$ or a blue copy $T\in\pzc{H}_i$ of $H$. 
\end{thm}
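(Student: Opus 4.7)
The plan is to adapt the proof of the $1$-statement of Kreuter's Theorem~\ref{thm:kreuterthresholds} to the robust setting, and to strengthen the quantitative bounds so that a union bound over the $t \le \exp(n^c)$ triples in $\bfc{U}$ succeeds. We fix a single triple $(U, \pzc{F}, \pzc{H}) \in \bfc{U}$ and aim to show that, for $C$ chosen large enough (in terms of $F, H, \delta_0, \eta_0, c$), the probability that some red/blue colouring of $[n]$ fails to produce either a red $F \in \pzc{F}$ or a blue $H \in \pzc{H}$ in $G(n,p)[U]$ is at most $\exp(-n^{c_1})$ for some $c_1 > c$. A union bound over the $t$ triples then yields the theorem.

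Naively union bounding Janson's inequality (Lemma~\ref{lem:vanillajanson}) over the $2^n$ colourings of $U$ is too wasteful at the Kreuter threshold, since when $m_1(F) < m_1(H)$ Janson delivers only $\exp(-\Omega(n^\gamma))$ per colouring with $\gamma = m_1(F)/m_K(F,H) < 1$, which does not beat the $2^n$ factor. We therefore follow Kreuter's structural route. Assume some colouring $\chi$ is bad, and pass to a minimal critical subgraph $G^\star \subseteq G(n,p)[U]$ admitting a bad colouring relative to copies from $\pzc{F}$ and $\pzc{H}$. By the minimality argument of \cite{kreuter1996threshold}*{Claim 1}, every copy $T$ of $F$ (in $\pzc{F}$) or $H$ (in $\pzc{H}$) contained in $G^\star$ has, at each vertex $v \in V(T)$, an attached copy in $G^\star$ of the \emph{other} graph, from the appropriate family, meeting $T$ exactly in $\{v\}$.

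We then run Kreuter's \emph{Hypertree Procedure}, iteratively building $J_0 \subset J_1 \subset \cdots \subset G^\star$ by attaching, at each step, a new copy from $\pzc{F}$ or $\pzc{H}$ at a single shared vertex, and bound the expected number of the resulting configurations in $G(n,p)$ via Kreuter's density-tracking function $f(i)$. The robustness conditions enter only in the attachment step: since only $\delta \card{U}^{v_F}$ (resp. $\delta \card{U}^{v_H}$) copies are excluded from $\pzc{F}$ (resp. $\pzc{H}$), an averaging argument shows that a $(1-O(\delta))$-fraction of potential attaching copies at a typical vertex remain available, so the procedure proceeds as in \cite{kreuter1996threshold} up to constant factors, provided $\delta_0$ is chosen small in terms of $F, H, \eta_0$.

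The main obstacle is the quantitative strengthening. Kreuter's original analysis terminates the procedure after $\sim \log n$ steps, giving a failure probability that is $o(1)$ but much weaker than $\exp(-n^{c_1})$, and hence insufficient when $t$ is super-polynomial. To reach the required bound, we must extend the algorithm so that it continues for $\sim n^{c_1}$ steps while keeping $f(i)$ under control, relying on the slack between $p$ and the threshold $n^{-1/m_K(F,H)}$ provided by the large constant $C$ and on a careful accounting of the additional $\delta$-error coming from the excluded copies. Verifying that $f$ remains positive throughout, and that the cumulative losses from the robustness conditions do not accumulate too quickly, is the central technical step and fixes the admissible value of $c$.
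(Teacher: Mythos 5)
Your proposal has a fundamental gap: you are reaching for the wrong half of Kreuter's machinery. The ``minimal critical subgraph'' / ``Hypertree Procedure'' / density-tracking function $f(i)$ argument that you describe is Kreuter's proof of the \emph{$0$-statement} of Theorem~\ref{thm:kreuterthresholds}, i.e.\ the argument that \emph{below} threshold a good colouring exists. There, one supposes for contradiction that no good colouring of $G(n,p)$ exists, passes to a minimal subgraph with this property, and uses the hypertree procedure to exhibit a dense substructure whose appearance is \emph{unlikely} below threshold. That logic simply does not transfer to the $1$-statement (which is what Theorem~\ref{thm:robustKreuter1} is): above threshold, those dense substructures \emph{are} likely to appear, so no contradiction arises, and in any case ``$G^\star$ admits a bad colouring'' is not a property whose minimal witnesses carry useful structure. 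Your acknowledgement that the procedure must somehow be ``extended to run for $\sim n^{c_1}$ steps'' is not a repair; the underlying contradiction scheme is absent.

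The correct route, and the one taken in the paper, is Kreuter's $1$-statement strategy sharpened quantitatively: expose $G(n,p) = G_1 \cup G_2$ in two rounds; in $G_1$ find, for every $i$, a large family $\pzc{D}_i \subseteq \pzc{H}_i$ of pairwise vertex-disjoint copies of $H$ of size $\tilde n_i = \Theta(n^\ell)$, where $\ell = \min_{J\subseteq H, e_J>0} (v_J - e_J/m_K(F,H)) \le 1$; observe that a colouring with no blue $H$ forces one red vertex in each copy in $\pzc{D}_i$, producing a red transversal set $W$; then show in $G_2$ that every such transversal $W$ contains a copy of $F$ from $\pzc{F}_i$. The crucial counting point, which your write-up correctly identifies as the obstruction to a naive union bound but does not resolve, is that the union is taken not over all $2^n$ colourings but over the at most $v_H^{\tilde n_i} \le \exp(n^\ell \log v_H)$ transversals, and the parameter $\tilde\Phi_i \gtrsim C' n^\ell$ appearing in Janson's inequality can be made to dominate this (and the factor $t \le \exp(n^c)$) by taking $C$ large. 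Handling the robustness conditions then requires showing that the excluded sets of copies $\binom{U_i}{F}\setminus\pzc F_i$ and $\binom{U_i}{H}\setminus\pzc H_i$ do not concentrate on $\pzc D_i$ or its transversals, which the paper does via an upper-tail bound (Lemma~\ref{lem:upper tail lemma}), Kim--Vu concentration (Lemma~\ref{lem:kimvu}), and the refined random graph $G(n,p)^H_q$ of Section~\ref{sec:janson} — an averaging statement of the sort you gesture at is not sufficient. None of this appears in your proposal, and the hypertree-procedure framework you do propose would not produce it.
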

Notice that, unlike in
Theorem~\ref{thm:kreuterthresholds},
Theorem~\ref{thm:robustKreuter1} allows for 
both $F$ and $H$ to be matchings.
The proof is similar to that of the $1$-statement of Theorem~\ref{thm:kreuterthresholds}, but this strengthened version requires a few additional ideas and some careful analysis of the failure probabilities at each step. We defer these details until Section~\ref{sec:robustKreuter1statement}, and instead complete the proof of Theorem~\ref{thm:main} next.

\subsubsection{An algorithm for the 1-statement} \label{sec:1statementsketch}

Recall that to prove the 1-statement of Theorem~\ref{thm:main}, we need to show that whenever $G_n$ is an $n$-vertex graph of density $d > 1 - 1/(k-1)$, and $p = \omega\left(n^{-1/m^*(K_r,H;k)}\right)$, then with high probability $G(n,p)$ is such that every vertex colouring of $G_n \cup G(n,p)$ gives rise to a red copy of $K_r$ or a blue copy of $H$. To do this, we use an algorithm that, given a vertex colouring of $G_n \cup G(n,p)$, returns one of the desired monochromatic subgraphs. We motivate and describe the algorithm in this subsection, while in the next we prove that, with high probability, $G(n,p)$ is such that the algorithm succeeds for every vertex colouring.

By Proposition~\ref{prop:regularity}, we can find a $k$-tuple of vertex sets $V_1, V_2, \hdots, V_k$ such that each pair is $\eps$-regular and reasonably dense. We then aim to use the Ramsey properties of $G(n,p)[V_i]$ to find suitable monochromatic subgraphs that can be pieced together to form a red $K_r$ or a blue $H$.

However, a na\"ive application of Theorem~\ref{thm:kreuterthresholds} will not work. Indeed, by definition of $m^*(K_r, H; k)$, there is some vector $(r_1, \hdots, r_k)$ with $\sum_i r_i \le r-1$ and some partition $H = H_1 \cup \hdots \cup H_k$ such that $p = \omega ( n^{-1/\beta(K_{r_i + 1}, H_i)} )$ for all $i$ with $H_i \neq \emptyset$. We can therefore expect that, for each $i$, we find a red $K_{r_i + 1}$ or a blue $H_i$ in any vertex colouring of $G(n,p)[V_i]$.

If these monochromatic subgraphs were all of the same colour, then we could hope to combine them to form a red clique (which would in fact be of size $r + k - 1$, significantly larger than required) or a blue copy of $H$. However, we could well find red cliques in some parts and blue subgraphs in others, which would leave us unable to complete either of the desired graphs.

Instead, we must use the full power of Definition~\ref{def:mstar}, which provides a suitable partition $H = H_1 \cup \hdots \cup H_k$ not just for \emph{some} vector $(r_1, \hdots, r_k)$, but rather for \emph{all} vectors $(r_1, \hdots, r_k)$ satisfying $\sum_i r_i \le r-1$.  We shall therefore proceed in stages, incrementally either increasing the size of a red clique or finding the next piece needed for a blue copy of $H$. We let $r_i$ denote the size of the largest red clique we have found in $V_i$ thus far, starting with $\vec{r} = \vec{0}$.

Given the current vector $\vec{r}$, we let $H = H_1 \cup \hdots \cup H_k$ be the corresponding minimising partition of $H$.  We then go through the parts in turn, applying the $(K_{r_i + 1}, H_i)_v$-Ramsey property of $G(n,p)[V_i]$ to find a blue $H_i$ or a red $K_{r_i+1}$. In the former case, we proceed to the next part.  If we make it through each of the $k$ parts, we will have found all the parts $H_i$ needed to build a blue copy of $H$.

Otherwise, in the latter case, we have increased the size of our red clique. We then update the vector $\vec{r}$ and the corresponding partition of $H$, return to the first part $V_1$, and resume the process.  Since this increases the size of our red clique, we will have built a red $K_r$ if this latter case occurs $r$ times.

There are still technicalities that need to be dealt with --- for instance, to ensure we can combine the monochromatic structures we find, we will need to restrict ourselves to the common neighbourhoods of the parts we have already found. This further requires us to only consider subgraphs with many common neighbours in all other parts, which is why we need the more robust $1$-statement of Theorem~\ref{thm:robustKreuter1}. In the following subsection, we provide the formal details of this algorithm and use the tools we have collected to prove that it runs successfully.

\subsubsection{Proof of correctness} \label{sec:1statementproof}

Given $\alpha > 0$ and $p = \omega (n^{-1/m^*(K_r, H; k)})$, our goal is to show that for any $n$-vertex graph $G_n$ of density $d \ge 1 - 1/(k-1) + 2 \alpha$, the graph $G_n \cup G(n,p)$ is with high probability $(K_r,H)_v$-Ramsey. Applying Proposition~\ref{prop:regularity} to $G_n$ with some suitably small\footnote{For our purposes, it suffices to take $\eps = \frac{\delta'}{4 k^2 r v_H (r + 2^{v_H})} \left( \frac{\alpha}{2} \right)^{2 k r v_H (r + 2^{v_H})}$, where $\delta'$ is the minimum value of $\delta_0$ from Theorem~\ref{thm:robustKreuter1} when the graph $F$ is a clique on at most $r$ vertices and the graph $H$ in the theorem is a subgraph of our given graph $H$.} regularity parameter $\eps$ gives $k$ pairwise-disjoint vertex sets $V_1, V_2, \hdots, V_k$, such that each pair $(V_i, V_j)$ is $\eps$-regular of density at least $\alpha$. We initiate by setting $U_j=V_j$ for $j\in [k]$. The sets $U_i$ will keep track of where we look to find certain subgraphs.

At several stages in the algorithm, we will, for some $i$, find a 
(constant sized) subgraph $\Gamma \subset G(n,p)[U_i]$, and will 
then want to shrink all the other parts $U_j$ to the common neighbours in 
$G_n$ of the vertices of $\Gamma$.  We shall therefore call $\Gamma$ \emph{popular} (with respect to some choice of $U_j\subseteq V_j$ for $j\in[k]$) if its vertices have at least $(\tfrac{\alpha}{2})^{v_{\Gamma}}|U_j|$ common $G_n$-neighbours in each $U_j$, $j \neq i$. 
Lemma~\ref{lem:regproperties} ensures that most potential copies of $\Gamma$ will be popular, and that when we shrink the sets $U_j$ to their large common neighbourhoods, the pairs will remain $\eps'$-regular with density at least $\tfrac{\alpha'}{2}$. By choosing the initial value of $\eps$ small enough, we can ensure that all subsequent values of $\eps'$ remain small.

\medskip

We first find copies of the subgraphs of $K_r$ and $H$ that are likely to have appeared in $G(n,p)$. Let $t := \max \{ s \in [r] : m(K_s) \le m^*(K_r, H; k) \}$ and let $\mc G := \{ H[U] : U \subseteq V(H), m(H[U]) \le m^*(K_r, H; k) \}$. We then define the graph $\Gamma$ to be the disjoint union of $v_H$ copies of $K_t$ together with one copy of each graph in $\mc G$. Then, for each $i \in [k]$ in turn, find a popular copy $\Gamma_i$ of $\Gamma$ in $G(n,p)[U_i]$, and shrink all other parts $U_j$ to the common neighbours of $V(\Gamma_i)$ in $U_j$. Note that, at the end of this process, for all $i$ the graph $\Gamma_i$ remains in the set $U_i$.

\medskip

We can now start the procedure outlined in the previous subsection.  Given an arbitrary red/blue colouring of the vertices of $G_n \cup G(n,p)$, we shall denote by $R_i$ the largest red clique found in $G(n,p)[U_i]$ thus far, initially setting $R_i = \emptyset$ for all $i \in [k]$.  The vector $\vec{r}$ will be defined by $r_i := v_{R_i}$, so we begin with $\vec{r} = \vec{0}$.

The outer loop of the algorithm runs as long as $\sum_i r_i \le r-1$, which means we have not yet found a red $K_r$. In this case, we take the minimising partition $H = H_1 \cup \hdots \cup H_k$ for the vector $\vec{r}$ in Definition~\ref{def:mstar}, and try to find a blue copy of $H$ according to this partition.

The inner loop of the algorithm runs over $i \in [k]$.  If $H_i = \emptyset$, then there is nothing to find in $G(n,p)[U_i]$, and so we proceed to the next part. Otherwise, since, as we shall soon show, $G(n,p)[U_i]$ is robustly $(K_{r_i+1}, H_i)_v$-Ramsey, we will find a popular blue $H_i$ or a popular red $K_{r_i+1}$. If we have a blue $H_i$, we let $B_i$ be this copy of $H_i$, shrink all other parts $U_j$ to the common neighbours of $V(B_i)$, and then proceed to the next part.

On the other hand, if we find a red $K_{r_i+1}$, then we have increased the size of our red clique. We then set $R_i$ to be this larger clique and shrink all other parts $U_j$ to the common neighbours of $V(R_i)$.  We update the vector $\vec{r}$, replacing $r_i$ with $r_i + 1$, and then break the inner loop and proceed to the next iteration of the outer loop (trying to find the new optimal partition of $H$, starting in $U_1$).

\medskip

Since we shrink to common neighbourhoods at each step, we ensure that the pieces we find in $G(n,p)[V_i]$ can be combined to form the graphs we need in $G_n \cup G(n,p)$.  In particular, if the inner loop were to run through all $k$ steps, then $\cup_i B_i$ would give a blue copy of $H$.  On the other hand, each iteration of the outer loop increases the size of our red clique, and after $r$ iterations $\cup_i R_i$ would give a red $K_r$.  Thus, after finitely many steps, the algorithm must return either a blue $H$ or a red $K_r$. Moreover, since the running time of the algorithm is bounded, the suitably small $\varepsilon$ we required at the beginning is some constant depending on $r, k$ and $H$, independent of the actual course taken by the algorithm.

\medskip

To complete the proof, we need to show that with high probability, $G(n,p)$ is such that the algorithm succeeds in finding all the necessary subgraphs at each step of the algorithm.  Firstly, let us consider the subgraphs $\Gamma_i$ which we find at the beginning of the algorithm. If $\Gamma$ is an independent set, clearly one can find these popular copies. Otherwise, noting that
$m(\Gamma) \leq m^*(K_r, H; k)$,
Lemma~\ref{lem:vanillajanson} ensures that we find these popular copies with high probability. 

Now we show that with high probability $G(n,p)[U_i]$ will always be robustly $(K_{r_i+1}, H_i)_v$-Ramsey, which we will mostly achieve through use of Theorem~\ref{thm:robustKreuter1}.   However, this only applies when $r_i, e(H_i) \ge 1$.  For the degenerate cases, we will need to make use of the graphs $\Gamma_i$ we found at the beginning. Suppose first that $r_i = 0$. By definition, we have $m^*(K_r, H;k) \ge \beta(K_{r_i+1}, H_i) = \beta(K_1, H_i) = m(H_i)$, and so $H_i$ appears in $\Gamma_i$. Then either this copy of $H_i$ is completely blue, or we find a red $K_1$, and so $\Gamma_i \subseteq G(n,p)[U_i]$ is indeed $(K_1, H_i)_v$-Ramsey. The other case, when $e(H_i) = 0$, follows similarly.  This time we have $m^*(K_r, H; k) \ge \beta(K_{r_i + 1}, H_i) = m(K_{r_i+1})$, and so $\Gamma_i$ contains $v_H$ copies of $K_{r_i + 1}$.  Either one of them is completely red, in which case we are done, or we have $v_H$ blue vertices, which in particular gives a blue copy of $H_i$.

This leaves us with the case when both $K_{r_i+1}$ and $H_i$ have edges. Again, by definition, we have $m^*(K_r, H; k) \ge \beta(K_{r_i+1}, H_i)$.  Thus, since $p = \omega(n^{-1/m^*(K_r,H;k)})$, Theorem~\ref{thm:kreuterthresholds} shows we should expect $G(n,p)[U_i]$ to be $(K_{r_i+1}, H_i)_v$-Ramsey. However, we need this to be true for all sets $U_i$ that could arise, and also need to find popular monochromatic copies of $K_{r_i+1}$ or $H_i$, and thus we apply Theorem~\ref{thm:robustKreuter1} instead.

Note that there is some constant $s = s(r,H;k)$ such that the sets $U_i$ that arise are the common neighbourhoods of a set of at most $s$ vertices, and hence there are at most $n^s$ many possibilities for the sequence $(U_i : i \in [k])$. In particular, this is far fewer than the $\exp(n^c)$ allowed by Theorem~\ref{thm:robustKreuter1}. Moreover, as these are always neighbourhoods of popular subgraphs, there is some constant $\eta_0 > 0$ such that $\card{U_i} \ge \eta_0 n$.

Thus, given a set $\Gamma \subseteq \bigcup_j V_j$ of at most $s$ vertices, we let, for each $j \in [k]$, $U_j \subseteq V_j$ be the common neighbours (in $G_n$) of $\Gamma \setminus V_j$. Provided $|U_j| \ge \eta_0 n$ for each $j$, we then define, for every $i \in [k]$, a triple $(U', \mc F', \mc H') \in \bfc{U}$, where we take $U'=U_i$, we let $\mc F'$ be all possible popular (with respect to the $U_j$) copies of $K_{r_i+1}$ in $U_i$, and let $\mc H'$ be all possible popular copies of $H_i$ in $U_i$. Lemma~\ref{lem:regproperties} (and our choice of small $\eps$) ensures that $\card*{\binom{U_i}{K_{r_i + 1}} \setminus \mc F'} \le \tfrac12 \delta_0 \card{U_i}^{r_i+1}$ and $\card*{\binom{U_i}{H_i} \setminus \mc H'} \le \tfrac12 \delta_0 \card{U_i}^{v_{H_i}}$.

We therefore satisfy all the requirements of Theorem~\ref{thm:robustKreuter1}, and can conclude that with high probability, the random graph $G(n,p)$ has the property that whenever we require $G(n,p)[U_i]$ to be $(K_{r_i+1}, H_i)_v$-Ramsey, it will be. As there are only finitely many pairs $(K_{r_i+1}, H_i)$ to consider, it follows that the algorithm succeeds with high probability overall, completing the proof.

\section{Robust Ramsey properties of random graphs} \label{sec3}
The aim of this section is to give a proof of Theorem \ref{thm:robustKreuter1}. Although our proof here is similar to that of Kreuter~\cite{kreuter1996threshold}, we choose to give the details as the argument is somewhat delicate and our proof departs from the original in some key steps. In particular, instead of using Tur\'an's theorem to estimate the maximal size of a set of vertex-disjoint copies of a given graph (as  done by Kreuter~\cite{kreuter1996threshold}), we use a probabilistic approach (as in \cite{alonspencer}*{Lemma 7.3.1}) which allows us to analyse the relevant subgraph counts at every step of the proof and guarantee that we find monochromatic copies of the graphs on the desired vertex sets. We first give some probabilistic tools and intermediate lemmas before embarking on the proof. 
\subsection{Probabilistic tools}

 \subsubsection{Chebyshev's inequality}
 
We will use the following well known inequality, see e.g.\ \cite{alonspencer}*{Chapter 4}. 

\begin{lem} \label{lem:chebyshev} 
 Suppose $\{A_i:i\in I\}$ is a finite set of events in some probability space and for each $i\in I$, let $X_i$ be the indicator random variable for the event $A_i$. Write $i\sim j$ if the events $A_i$ and $A_j$ are \emph{not} independent. Further, let $X:=\sum_{i\in I} X_i$ be the sum of the indicator random variables and define \[\Delta:=\sum_{i\sim j}\mathbb{E}[X_i X_j],\]
 where the sum is over all ordered pairs $(i,j)$ (including diagonal terms). Then for all $t>0$, \[\mathbb{P}[|X-\mathbb{E}[X]|\geq t]\leq \frac{\Delta}{t^2}.\]
 \end{lem}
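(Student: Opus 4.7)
The plan is to reduce the statement to the classical form of Chebyshev's inequality, namely $\mathbb{P}[|X - \mathbb{E}[X]| \geq t] \leq \mathrm{Var}(X)/t^2$, which follows by applying Markov's inequality to the nonnegative random variable $(X-\mathbb{E}[X])^2$. The real content of the lemma is then to show that $\mathrm{Var}(X) \leq \Delta$, so that the conclusion follows immediately.

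To bound $\mathrm{Var}(X)$, I would use bilinearity of covariance to write
\[ \mathrm{Var}(X) = \sum_{i,j \in I} \mathrm{Cov}(X_i, X_j) = \sum_{i,j \in I} \bigl( \mathbb{E}[X_i X_j] - \mathbb{E}[X_i]\,\mathbb{E}[X_j] \bigr). \]
For any ordered pair $(i,j)$ with $i \not\sim j$ and $i \neq j$, the events $A_i$ and $A_j$ are independent, so $\mathbb{E}[X_i X_j] = \mathbb{E}[X_i]\,\mathbb{E}[X_j]$ and the corresponding summand vanishes. Thus only pairs with $i \sim j$ contribute, and for those pairs the inequality $\mathrm{Cov}(X_i, X_j) \leq \mathbb{E}[X_i X_j]$ holds trivially because $\mathbb{E}[X_i]\,\mathbb{E}[X_j] \geq 0$ (the $X_i$ are indicators, hence nonnegative). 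Summing over all such ordered pairs yields
\[ \mathrm{Var}(X) \leq \sum_{i \sim j} \mathbb{E}[X_i X_j] = \Delta, \]
where we use that the convention in the statement includes the diagonal terms $i = j$ (which contribute exactly $\mathbb{E}[X_i^2] = \mathbb{E}[X_i]$ each, safely absorbing the variance contributions of the individual indicators).

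Combining this with the classical Chebyshev bound immediately gives the claim. There is no serious obstacle here: the entire argument is a bookkeeping exercise that unpacks the definitions of variance, covariance, and independence, and then uses nonnegativity of the indicators to drop the subtracted product term. The only point that deserves a brief remark in the write-up is the convention about including diagonal pairs in $\Delta$, since this is what lets us bound the variance (rather than just the sum of off-diagonal covariances) by $\Delta$ in one shot.
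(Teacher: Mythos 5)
Your argument is correct and is precisely the standard proof of this well-known inequality that the paper cites to Alon--Spencer (Chapter~4) without reproducing: Markov's inequality applied to $(X - \mathbb{E}[X])^2$, combined with the bound $\mathrm{Var}(X) \leq \Delta$ obtained by expanding the variance as a double sum of covariances, dropping the $i\not\sim j$ terms by independence, and bounding each remaining covariance by $\mathbb{E}[X_iX_j]$. The one pedantic point you gloss over is the diagonal pairs $(i,i)$ with $i \not\sim i$: self-independence of $A_i$ forces $\mathbb{P}[A_i] \in \{0,1\}$ and hence $\mathrm{Var}(X_i) = 0$, so these also contribute nothing and the argument goes through.
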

 
\subsubsection{Janson's inequality for a refined random graph} \label{sec:janson}
We are primarily concerned with the appearance of a subgraph $H$ in a random graph $G(n,p)$. While it is easy to compute the probability of a given copy of $H$ being present in $G(n,p)$, it is inconvenient that these copies need not be independent --- two copies that share edges will be positively correlated.

To smooth the analysis, we will artificially introduce a filtering stage, where we select each copy of $H$ independently, and then only focus on the selected copies of $H$ that appear in $G(n,p)$. Formally, let~$\rho(H;q):~\binom{[n]}{H}\rightarrow~\{0,1\}$ be a function that randomly assigns $1$ with probability $q$  and $0$ with probability $1-q$ to each copy of $H$ in $K_n$, independently of the other choices. Recalling that $\binom{G(n,p)}{H}$ denotes the family of all copies of $H$ appearing in $G(n,p)$, we let $\binom{G(n,p)}{H}_q$ denote the random subfamily consisting of selected copies of $H$ in $G(n,p)$.

The following lemma shows that even when we are only interested in some dense collection of `good' copies of $H$, we are still very likely to have many of them appear in $\binom{G(n,p)}{H}_q$.
 
 \begin{lem}
 \label{lem:janson} 
 Let $H$ be a graph with at least one edge, $p=p(n), q=q(n)$, and $\pzc{H}\subseteq \binom{[n]}{H}$ be some family of $\Omega(n^{v_H})$  potential copies of $H$ on $[n]$. Letting $X_q := \card{\pzc{H} \cap \binom{G(n,p)}{H}_q}$, we have that \[\mathbb{P}[X_q\leq \mathbb{E}[X_q]/2]\leq \exp (-\Omega(\Phi_{H,p}))+\exp(-\Omega(qn^{v_H}p^{e_H})), \]
 where $\Phi_{H,p}$ is as defined in~\eqref{eqn:Phidefn}.
 \end{lem}
 The lemma follows from an application of Lemma~\ref{lem:vanillajanson}, which gives concentration for the number of copies $S\in\pzc{H}$ of $H$ that appear in $G(n,p)$.  Each copy is then kept with probability $q$, independently of the others, and so Chernoff's inequality (see e.g.\ \cite{randomgraphbook}*{Theorem 2.1}) gives concentration for the number of these copies that appear in $\binom{G(n,p)}{H}_q$.  
 
 \subsubsection{An exponential upper tail bound}

 Janson's inequality (Lemma \ref{lem:vanillajanson}) allows us to conclude that the probability that the number of embeddings of a graph $H$ in $G(n,p)$ is significantly smaller than its expectation is exponentially small. On the other hand, one can use Lemma \ref{lem:chebyshev} to give a bound on the probability that the number of copies of $H$ is much higher than expected. However, the concentration given by Lemma \ref{lem:chebyshev} is not enough for our purposes. We therefore need the following bound for the upper tail of the distribution of subgraph counts in random graphs. This is a simplification of the main result in \cite{janson2004upper} and the proof is almost identical. The only departing point from the exposition in \cite{janson2004upper} is that Lemma \ref{lem:upper tail lemma} allows us to apply the result to the number of copies of $H$ on prescribed vertex subsets\footnote{In fact, although we will only use this result to restrict to copies of $H$ that lie on prescribed vertex sets, note that the statement could be applied in more generality, to an arbitrary (large enough) family of potential copies.} (as opposed to the total number of copies of $H$ in $G(n,p)$). This results in a factor of $1/\epsilon^m$ in the upper bound of $\mathbb{E}[X^m]$ in the proof of  \cite{janson2004upper}*{Theorem 1.2}, which can be counteracted by choosing a smaller constant $c>0$ below.
 
 \begin{lem} \label{lem:upper tail lemma}
 Let $H$ be a  graph with at least one edge, $\epsilon>0$ and $p=p(n)$ such that $\Phi:=\Phi_{H,p}\geq 1$. Then there exists some $c=c(H,\epsilon)>0$ such that the following holds. Let $\pzc{H}\subset \binom{[n]}{H}$ be some family of $\epsilon n^{v_H}$ potential copies of $H$. Letting $X$ be the random variable that counts the number copies $S\in\pzc{H}$ which appear in $G(n,p)$ on $[n]$, we have that \[ \mathbb{P}[X\geq 2\mathbb{E}[X]]\leq  \exp \left(-c\Phi^{\frac{1}{e_H}}\right).  \]
 \end{lem}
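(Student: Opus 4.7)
The plan is to adapt the moment-method proof of Janson and Ruci\'nski~\cite{janson2004upper}. Writing $X_S = \mathbf{1}[S \subseteq G(n,p)]$ for each $S \in \pzc{H}$ so that $X = \sum_{S \in \pzc{H}} X_S$ and $\mu := \mathbb{E}[X] = \epsilon n^{v_H} p^{e_H}$ (up to a constant depending only on $H$), Markov's inequality applied at the $m$-th moment gives
\[ \mathbb{P}[X \geq 2\mu] \leq \frac{\mathbb{E}[X^m]}{(2\mu)^m} \]
for any positive integer $m$, so the task reduces to bounding $\mathbb{E}[X^m]$.

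First I would expand $\mathbb{E}[X^m] = \sum_{(S_1, \ldots, S_m) \in \pzc{H}^m} p^{e(S_1 \cup \cdots \cup S_m)}$ and group the $m$-tuples by the isomorphism type of their union graph, exactly as in the proof of~\cite{janson2004upper}*{Theorem 1.2}. The core technical content there is a careful estimate of the contribution of each overlap type, using the definition of $\Phi$ to control the probability factors together with the hypothesis $\Phi \geq 1$ to control the combinatorial factors. Since our sum runs over $\pzc{H}^m \subseteq \binom{[n]}{H}^m$, each term is bounded above by the corresponding term in the full-family sum, and hence the overlap-type estimates from~\cite{janson2004upper} transfer directly.

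The only place where the restriction to $\pzc{H}$ creates a real change is in the comparison of $\mathbb{E}[X^m]$ against the denominator $(2\mu)^m$, which is a factor of $\epsilon^m$ smaller than the corresponding denominator for the full family. This produces an extra factor of $\epsilon^{-m}$ in the final bound on $\mathbb{E}[X^m]/(2\mu)^m$ relative to the Janson--Ruci\'nski setting. To finish, I would choose $m := \lfloor c_0 \Phi^{1/e_H} \rfloor$ for some constant $c_0 = c_0(H,\epsilon) > 0$ chosen sufficiently small (smaller than in~\cite{janson2004upper} by an amount depending on $\epsilon$), so that the $\epsilon^{-m}$ loss is absorbed into the exponent. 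The final bound $\mathbb{P}[X \geq 2\mu] \leq \exp(-c \Phi^{1/e_H})$ then follows for some $c = c(H,\epsilon) > 0$; this is precisely the ``counteracting'' referred to in the passage preceding the lemma statement.

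The main obstacle is the moment estimate itself, which is the technical heart of~\cite{janson2004upper}. In our setting we do no new work for the off-diagonal contributions, so the proof reduces to careful bookkeeping of where in the optimisation the $\epsilon^{-m}$ factor enters and how small $c_0$ must be taken to compensate; the remainder of the argument follows the Janson--Ruci\'nski blueprint verbatim.
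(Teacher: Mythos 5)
Your proposal follows exactly the same route as the paper's own remarks preceding the lemma: adapt the moment-method proof of Janson--Ruci\'nski's Theorem~1.2 in \cite{janson2004upper}, observe that restricting the sum to $\pzc{H}$ introduces a factor of $\epsilon^{-m}$ into the ratio $\mathbb{E}[X^m]/(2\mu)^m$ (the paper locates this as a $1/\epsilon^m$ loss in the bound on $\mathbb{E}[X^m]$, but the bookkeeping is identical), and compensate by taking the constant $c_0$ in $m = \lfloor c_0\Phi^{1/e_H}\rfloor$ sufficiently small. This is precisely the paper's argument, so the proposal is correct.
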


\subsubsection{Kim--Vu polynomial concentration} The last tool we need is the result of Kim and Vu \cite{kim2000concentration} (see also \cite{alonspencer}*{Section 7.8}). We state here a simplified version which is catered to our purposes. 

\begin{lem}\label{lem:kimvu}
Given $k\in \mathbb{N}$, let $c:=8^{-1}(4k!)^{-1/(2k)}$, and let $\bm{H}=(V,E)$ be a $k$-uniform hypergraph with $|V|=N$, $|E|=M$. Now consider the set $V'$ obtained by keeping each vertex of $V$ with some probability $q=q(N)\in[0,1]$, independently of the other vertices. We are interested in the random variable $Y:=e_{\bm{H}[V']}$ and we fix  $\mu:=\mathbb{E}[Y]=Mq^k$. Then
setting \[\nu:=\max_{1\leq i\leq k}\max_{I\in\binom{[N]}{i}}\deg_{\bm{H}} (I)q^{k-i},\] we have that if $\nu\leq \mu$, then 
\[\mathbb{P}\left(|Y-\mu|\geq \frac{\mu}{2}\right)\leq 2e^2N^{k-1}\exp\left(-c\left(\frac{\mu}{\nu}\right)^{\frac{1}{2k}}\right).\]
\end{lem}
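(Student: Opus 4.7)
The plan is to derive Lemma~\ref{lem:kimvu} as a direct specialization of the Kim--Vu polynomial concentration inequality in the form of \cite{alonspencer}*{Theorem 7.8.1}. First I would encode $Y$ as a polynomial in independent Bernoulli variables: for each $v \in V$, let $t_v$ be the indicator of the event $v \in V'$, so that $\mathbb{P}(t_v = 1) = q$, and write
\[ Y = \sum_{e \in E} \prod_{v \in e} t_v. \]
This is a homogeneous multilinear polynomial of degree $k$ in $(t_v)_{v \in V}$ with $0/1$ coefficients. For $I \subseteq V$ with $|I| = i \le k$, the partial derivative $\partial^I Y = \sum_{e \in E,\, e \supseteq I} \prod_{v \in e \setminus I} t_v$ has expectation $\mathbb{E}[\partial^I Y] = \deg_{\bm H}(I)\, q^{k-i}$. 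Writing $\mathcal{E}_i(Y) := \max_{|I|=i} \mathbb{E}[\partial^I Y]$ in the Kim--Vu notation, one obtains $\mathcal{E}_0(Y) = \mu$ and $\max_{1 \le i \le k} \mathcal{E}_i(Y) = \nu$, so the hypothesis $\nu \le \mu$ ensures that the two Kim--Vu parameters $\mathcal{E}(Y) := \max_{0 \le i \le k} \mathcal{E}_i(Y)$ and $\mathcal{E}'(Y) := \max_{1 \le i \le k} \mathcal{E}_i(Y)$ coincide with $\mu$ and $\nu$ respectively.

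With this identification in place, the Kim--Vu inequality yields, for every $\lambda \ge 1$,
\[ \mathbb{P}\bigl(|Y - \mu| \ge a_k \sqrt{\mu \nu}\, \lambda^k\bigr) \le 2 e^2 N^{k-1} \exp(-\lambda), \]
where $a_k = 8^k \sqrt{k!}$. I would then select $\lambda$ so that the deviation on the left is exactly $\mu/2$; solving $a_k \sqrt{\mu \nu}\, \lambda^k = \mu/2$ forces $\lambda = (\mu/\nu)^{1/(2k)} / (2 a_k)^{1/k}$. A direct computation shows $(2 a_k)^{-1/k} = 2^{-1/k} \cdot 8^{-1} \cdot (k!)^{-1/(2k)} = 8^{-1} (4\, k!)^{-1/(2k)}$, which is exactly the constant $c$ in the statement of the lemma, so plugging in this $\lambda$ reproduces the claimed bound verbatim.

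The only technical point requiring attention is the hypothesis $\lambda \ge 1$ in Kim--Vu, which may fail when $\mu/\nu < (2 a_k)^2$. In that range, however, $c(\mu/\nu)^{1/(2k)} \le 1$, so the right-hand side of the lemma is at least $2 e^2 N^{k-1} e^{-1} \ge 1$, and the stated inequality holds vacuously as a probability bound. Hence no serious obstacle arises: the entire argument reduces to careful arithmetic bookkeeping of the Kim--Vu constants, which is consistent with the lemma being explicitly presented as a packaged corollary of the general concentration result of \cite{kim2000concentration}.
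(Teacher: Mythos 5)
Your derivation is correct and is exactly the intended specialization: the paper gives no proof, stating the lemma as a packaged form of Kim--Vu with references to \cite{kim2000concentration} and \cite{alonspencer}*{Section 7.8}, and your encoding of $Y$ as the degree-$k$ multilinear polynomial, the identification $\mathcal{E}(Y)=\mu$, $\mathcal{E}'(Y)=\nu$ (using $\nu\le\mu$), the arithmetic giving $(2a_k)^{-1/k}=8^{-1}(4\,k!)^{-1/(2k)}=c$, and the vacuous-case observation for $\lambda<1$ all check out.
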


\subsection{Proof of Theorem \ref{thm:robustKreuter1}} \label{sec:robustKreuter1statement}
 Towards proving Theorem~\ref{thm:robustKreuter1}, we first prove some lemmas. For a fixed nonempty graph $H$, we define
\[\cH(H):=\{H_1\cup H_2: H_1\equiv H_2\equiv H, V(H_1)\cap V(H_2)\neq \emptyset, H_1\cup H_2 \not\equiv H\}\]
to be the set of graphs that can be obtained by taking  the union of two distinct copies of $H$  which intersect in at least one vertex.  Recall also the definition of $\binom{G(n,p)}{H}_q$ from Section~\ref{sec:janson}.

\begin{lem} \label{lem:overlapping copies}
Let $H$ be a  graph with at least one edge, and let $\Phi':=\min\{n,\Phi_{H,p}\}$. Then there exists $C=C(H)>0$ such that the following holds for all  $q=q(n,p)$ such that $q\leq \Phi'/(n^{v_H}p^{e_H})$. Let $X_H = \card{ \binom{G(n,p)}{H}_q}$ be the number of selected copies of $H$ in $G(n,p)$, and for each $\bar{H} \in \cH(H)$, let $Y_{\bar H}$ be the random variable counting the number of intersecting pairs of copies of $H$ in $\binom{G(n,p)}{H}_q$ whose union is isomorphic to $\bar H$. Then we have that 
\[\mathbb{P}\left(\sum_{\bar H\in \cH(H)}Y_{\bar H}\geq  \frac{C\mathbb{E}[X_H]^2}{\Phi'}\right)\leq \frac{C\Phi'}{\mathbb{E}[X_H]^2}. \]
\end{lem}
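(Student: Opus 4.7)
The proof is a first- and second-moment analysis of $Y := \sum_{\bar H \in \cH(H)} X_{\bar H}$, culminating in Chebyshev's inequality (Lemma~\ref{lem:chebyshev}). Each copy of some $\bar H \in \cH(H)$ inside $G(n,p)^H_q$ arises, up to $H$-dependent constants, from an ordered pair $(S,T)$ of distinct $H$-copies, both in $G(n,p)^H_q$, whose vertex sets intersect. Stratifying by the isomorphism type $J \subsetneq H$ of the intersection $S \cap T$ (with $v_J \geq 1$), the number of such ordered pairs on $[n]$ is $\Theta(n^{2v_H - v_J})$ and each appears with probability $q^2 p^{2e_H - e_J}$. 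Summing, and recalling $\mathbb{E}[X_H] = \Theta(n^{v_H} q p^{e_H})$, gives
\[
  \mathbb{E}[Y] \;=\; \Theta\!\left( \mathbb{E}[X_H]^2 \sum_{J} n^{-v_J} p^{-e_J} \right) \;=\; \Theta(\mathbb{E}[X_H]^2/\Phi'),
\]
since $\max_J n^{-v_J} p^{-e_J} = \max(1/n, 1/\Phi_{H,p}) = 1/\Phi'$, attained either at a single-vertex $J$ with $e_J = 0$ or at the minimiser from the definition of $\Phi_{H,p}$.

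Writing $Y$ as a sum of indicators $I_{\{S,T\}}$ over overlapping pairs, Chebyshev's inequality reduces to controlling
\[
  \Delta \;=\; \sum_{\{S,T\} \sim \{U,W\}} \mathbb{E}[I_{\{S,T\}} I_{\{U,W\}}],
\]
summed over pairs of indicators that are correlated, i.e.\ whose underlying pairs either share a common $H$-copy or a common edge of $G(n,p)$. The diagonal $\{S,T\} = \{U,W\}$ contributes exactly $\mathbb{E}[Y]$. The off-diagonal terms split into two cases: pairs sharing exactly one $H$-copy (three distinct copies $S,T,U$ with both $T$ and $U$ overlapping $S$) and pairs of four distinct copies $S,T,U,W$ whose edge sets overlap. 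For each case, a counting analogous to the first-moment argument --- stratifying by all relevant pairwise intersection types --- shows that the contribution is of order $\mathbb{E}[X_H]^k / (\Phi')^{k-1}$ for $k \in \{3,4\}$. Using the hypothesis $\mathbb{E}[X_H] \leq \Phi'$ (which is equivalent to $q \leq \Phi'/(n^{v_H}p^{e_H})$), each such contribution reduces to $O(\mathbb{E}[X_H]^2/\Phi') = O(\mathbb{E}[Y])$, yielding $\Delta \leq C_1 \mathbb{E}[Y]$.

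Chebyshev then gives
\[
  \mathbb{P}\bigl(Y \geq 2\mathbb{E}[Y]\bigr) \;\leq\; \Delta/\mathbb{E}[Y]^2 \;\leq\; C_1/\mathbb{E}[Y] \;\leq\; C_2 \Phi'/\mathbb{E}[X_H]^2,
\]
using also the matching lower bound $\mathbb{E}[Y] \geq c_0 \mathbb{E}[X_H]^2/\Phi'$ supplied by the first moment computation. Since $2\mathbb{E}[Y] \leq 2C_0 \mathbb{E}[X_H]^2/\Phi'$, picking $C \geq \max(2C_0, C_2)$ completes the argument. The main obstacle will be the variance step: one must cleanly enumerate all overlap patterns involving three or four $H$-copies and verify that each configuration is dominated by $\mathbb{E}[Y]$ thanks to the constraint on $q$. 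Conceptually, the first-moment computation produces one factor of $1/\Phi'$ from the sum over intersection types, while each additional $H$-copy brought into the correlation analysis for $\Delta$ contributes exactly one further factor of $\mathbb{E}[X_H]/\Phi' \leq 1$, so the off-diagonal terms remain absorbed into the diagonal $\mathbb{E}[Y]$.
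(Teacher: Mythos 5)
Your overall strategy --- Chebyshev's inequality on the overlapping-pair count, with a second-moment estimate obtained by stratifying over intersection types and invoking $\mathbb{E}[X_H] \le \Phi'$ (equivalent to the hypothesis on $q$) --- is exactly the route the paper takes, up to the cosmetic choice of working with the aggregated variable $Y=\sum_{\bar H}X_{\bar H}$ rather than treating each $\bar H$ separately and union bounding at the end.

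However, there is a genuine flaw in the step where you claim a \emph{matching lower bound} $\mathbb{E}[Y] \ge c_0\,\mathbb{E}[X_H]^2/\Phi'$. This lower bound can fail. Your first-moment computation shows $\mathbb{E}[Y] = \Theta\bigl(\mathbb{E}[X_H]^2 \max_{J\subsetneq H,\,v_J\ge 1} n^{-v_J}p^{-e_J}\bigr)$, and you assert this maximum equals $1/\Phi'$. The upper bound is fine, but for the lower bound you need the minimiser of $\Phi_{H,p}=\min_{J\subseteq H,\,e_J>0}n^{v_J}p^{e_J}$ to be attained at some \emph{proper} subgraph of $H$; if it is attained only at $J=H$ itself, then every allowed intersection type $J\subsetneq H$ gives $n^{v_J}p^{e_J}$ strictly larger than $\Phi_{H,p}$, possibly by an unbounded factor. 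For a concrete counterexample take $H=K_2$, $q=1$, $p=n^{-3/2}$: then $\Phi'=\Phi_{H,p}=n^{1/2}$, $\mathbb{E}[X_H]=\Theta(n^{1/2})$, so $\mathbb{E}[X_H]^2/\Phi'=\Theta(n^{1/2})$, yet $\cH(K_2)=\{P_3\}$ and $\mathbb{E}[Y]=\mathbb{E}[X_{P_3}]=\Theta(n^3p^2)=\Theta(1)$, which is $o(n^{1/2})$. This breaks both places you use the lower bound: the reduction $\Delta = O(\mathbb{E}[Y])$ (you really only get $\Delta = O(\mathbb{E}[X_H]^2/\Phi')$, which here is much larger than $\mathbb{E}[Y]$) and the final step $C_1/\mathbb{E}[Y]\le C_2\Phi'/\mathbb{E}[X_H]^2$.

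The fix, which is what the paper does, is to dispense with the lower bound on $\mathbb{E}[Y]$ altogether: apply Chebyshev with the fixed deviation $t=\tfrac{C}{2}\mathbb{E}[X_H]^2/\Phi'$ (not $t=\mathbb{E}[Y]$), use only $\mathbb{E}[Y]\le \tfrac{C}{2}\mathbb{E}[X_H]^2/\Phi'$ to ensure $\{Y\ge C\mathbb{E}[X_H]^2/\Phi'\}\subseteq\{|Y-\mathbb{E}[Y]|\ge t\}$, and bound $\Delta\le \tilde C\,\mathbb{E}[X_H]^2/\Phi'$ directly via the configuration count (the diagonal gives $\mathbb{E}[Y]\le \mathbb{E}[X_H]^2/\Phi'$ and each off-diagonal class gives $\mathbb{E}[X_H]^k/\Phi'^{\,k-1}\le \mathbb{E}[X_H]^2/\Phi'$ by $\mathbb{E}[X_H]\le\Phi'$). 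Then $\mathbb{P}(|Y-\mathbb{E}[Y]|\ge t)\le \Delta/t^2\lesssim \Phi'/\mathbb{E}[X_H]^2$. With this adjustment your argument matches the paper's.
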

\begin{proof}
This is a simple application of Chebyshev's inequality, Lemma \ref{lem:chebyshev}. The proof of \cite{randomgraphbook}*{Theorem 3.29} contains a similar calculation. 

Let us fix some $\bar H=H_1\cup H_2\in \cH(H)$ and show that $Y_{\bar H}\geq C'\mathbb{E}[X_H]^2/\Phi'$ with probability at most $\bar C\Phi'/\mathbb{E}[X_H]^2$ for some $C', \bar C>0$. The conclusion will then follow by a union bound, as there are only finitely many possible $\bar H\in \cH(H)$. First, let us upper bound the expectation of $Y_{\bar H}$ as follows. Defining $J=H_1\cap H_2$ as the intersection of the two copies of $H$ that comprise $\bar H$, we have that 
\begin{align*}
    \mathbb{E}[Y_{\bar H}]&\leq q^2n^{v_{(H_1\cup H_2)}}p^{e_{(H_1\cup H_2)}}  
    = \frac{q^2n^{2v_H}p^{2e_H}}{n^{v_J}p^{e_J}} 
    \leq \frac{C'\mathbb{E}[X_H]^2}{2\Phi'},
\end{align*}
for some appropriately defined $C'> 0$, using that $v_J\geq 1$ and $n^{v_J}p^{e_J}\geq \Phi_{H,p}$ if  $e_J\neq 0$. We now turn to concentration and look to apply Lemma \ref{lem:chebyshev}. In order to do this, we need an upper bound estimate on $\Delta$, which counts the expected number of non-independent pairs of copies of $\bar H$, each of whose two copies of $H$ are in $\binom{G(n,p)}{H}_q$. In particular, it counts the number of pairs of copies of $\bar H$ which overlap in at least one edge. So let us fix some graph $H^*=H_1\cup H_2 \cup  H'_1\cup H_2'$ such that both $H_1\cup H_2$ and $H_1'\cup H'_2$ are  copies of $\bar H$,  each $H_i$ and $H_i'$ is copy of $H$, and $H_1\cup H_2$ intersects $H'_1\cup H_2'$ in at least an edge, and let $Y_{H^*}$ count the number of quadruples of copies of $H$ in $\binom{G(n,p)}{H}_q$ whose union is isomorphic to $H^*$. There are finitely many such $H^*$, and we have the upper bound $\Delta \le \sum_{H^*} \mathbb{E}[Y_{H^*}]$.

Given such a configuration $H^*$, let $\tilde{x}$ be $2$ if $H^*=H_1\cup H_2=H_1'\cup H_2'$ is a single copy of $\bar H$, $1$ if $H_i=H'_j$ in $H^*$ for some $i,j\in \{1,2\}$ and $0$ otherwise. In other words, $\tilde{x}$ indicates the number of `repeated' copies of $H$ in $H^*$. Swapping the indices $1$ and $2$ if necessary, let $J_1=H_1\cap H_2$, $J_2=H_1'\cap(H_1\cup H_2)$ and $J_3=H_2'\cap (H_1\cup H_2 \cup H_1')$ such that each $J_i$ contains at least one vertex. This is possible due to the fact that $H_1$ and $H_2$ intersect in at least a vertex in $\bar H$ and the two copies of $\bar H$ intersect in at least an edge. Then we have that 
\begin{align*}
    \mathbb{E}[Y_{H^*}]&\leq q^{4-\tilde{x}}n^{v_{(H_1\cup H_2\cup H'_1 \cup H'_2)}}p^{e_{(H_1\cup H_2\cup H'_1 \cup H'_2)}} \\
    & = \frac{q^{4-\tilde{x}}n^{4v_H}p^{4e_H}}{n^{v_{J_1}+v_{J_2}+v_{J_3}}p^{e_{J_1}+e_{J_2}+e_{J_3}}}  \\
&\leq C'' \mathbb{E}[X_H]^{4-\tilde{x}}/(\Phi')^{3-\tilde{x}} \\
&\leq C''\mathbb{E}[X_H]^2/\Phi',
\end{align*}
for appropriately defined $C''>0$, using that  at least $\tilde{x}$ of  $J_2$ and 
$J_3$ are copies of $H$, and using that $\mathbb{E}[X_H]\leq qn^{v_H}p^{e_H}\leq \Phi'$ in
the final step (recall that by hypothesis $q \leq \Phi ' /(n^{v_H}p^{e_H})$).
Thus, summing over all possible $H^*$, we get that $\Delta \leq \tilde{C}
\mathbb{E}[X_H]^2/\Phi'$ for some $\tilde{C}>0$ and by Lemma \ref{lem:chebyshev},
\begin{align*}
    \mathbb{P}\left(Y_{\bar H}\geq \frac{C' \mathbb{E}[X_H]^2}{\Phi'}\right)&\leq \mathbb{P}\left(Y_{\bar H} \geq \mathbb{E}[X_{\bar H}]+\frac{C'\mathbb{E}[X_H]^2}{2\Phi'}\right) 
     \leq \frac{\bar C \Phi'}{\mathbb{E}[X_H]^2},
\end{align*}
for $\bar C=4\tilde{C}/C'^2$.  Thus summing over all $\bar{H}\in \cH(H)$ and taking a union bound on the failure probabilities, we can choose $C>0$ appropriately so that the statement of the lemma is satisfied. 
\end{proof}
 
 Let $\bigsqcup^kH$ denote the graph obtained by taking $k$ vertex-disjoint copies of $H$.
 We say that a set $K\in\binom{[n]}{k}$ is a \emph{transversal} of a copy $S$ of
 $\bigsqcup^kH$ if $K$ contains one vertex from each of the copies of $H$ that comprise $S$.
 Further, given $\cK\subseteq \binom{[n]}{k}$, we say a copy of $\bigsqcup^kH$ on $[n]$ is
 \emph{$\cK$-spanning} if it contains a set from $\cK$ as a transversal. 
 
 \begin{lem} \label{lem:dangersets}
 Let $H$ be a  graph with at least one edge, $k\in \mathbb{N}$ and $\delta>0$. 
 Set $\delta_1:= \delta (kv_H)^k$.
 Then there exists a $c>0$ and $n_0\in \mathbb{N}$ such that if $p=p(n)\geq n^{-1/m(H)}$ and $q=q(n,p)$ satisfy $qn^{v_H}p^{e_H}>(\log n)^{3k}$ then the following holds for all $n\geq n_0$.  Suppose that $\cK\subset \binom{[n]}{k}$ is such that $|\cK|\leq \delta n^k$. Letting $Y$ be the random variable that counts the number of $\cK$-spanning copies of $\bigsqcup^kH$, composed of copies of $H$ in $\binom{G(n,p)}{H}_q$,
 we have that 
 \[\mathbb{P}(Y\geq 4 \delta_1 ( n^{v_H}p^{e_H}q)^k)\leq \exp\left(-c \Phi(H,p)^{\frac{1}{ke_H}}\right)+\exp\left(-c\left(qn^{v_H}p^{e_H}\right)^{\frac{1}{2k}}\right).\]
 \end{lem}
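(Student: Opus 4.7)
The plan is a two-stage argument that decouples the two sources of randomness in $G(n,p)^H_q$: the random graph $G(n,p)$ and the independent copy-selection $\rho(H;q)$. The two stages will produce the two error terms in the bound. For the first stage, let $X$ count the $\mathcal{K}$-spanning copies of $\bigsqcup^k H$ present in $G(n,p)$ (without the $\rho$-filter). Counting, for each $K\in \mathcal{K}$, the number of potential $\bigsqcup^k H$ on $[n]$ having $K$ as a transversal (at most $k!\, v_H^k\, n^{k(v_H-1)}$, by assigning $K$ to the $k$ copies, choosing a position in $V(H)$ for each $v\in K$, and filling in the remaining vertices), one obtains $\mathbb{E}[X]\leq \delta_1(n^{v_H}p^{e_H})^k$. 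Because $\Phi_{\bigsqcup^k H,p}=\Phi_{H,p}$ and $e(\bigsqcup^k H)=ke_H$, an application of Lemma~\ref{lem:upper tail lemma} to $\bigsqcup^k H$ then gives $X\leq 2\mathbb{E}[X]$ except with probability at most $\exp\bl-c\Phi(H,p)^{1/(ke_H)}\br$, the first error term. An analogous application to $H$ controls the total number $N$ of copies of $H$ in $G(n,p)$, giving $N\leq Cn^{v_H}p^{e_H}$ with an even smaller failure probability.

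For the second stage, condition on the high-probability events above. The conditional distribution of $Y$ is that of edges surviving a $q$-random vertex selection in the fixed $k$-uniform hypergraph $\bm{H}_{G(n,p)}$ whose vertex set is the $N$ copies of $H$ in $G(n,p)$ and whose edges are the $X$ $\mathcal{K}$-spanning $\bigsqcup^k H$. Setting $\mu=\mathbb{E}[Y\mid G(n,p)]=Xq^k\leq 2\delta_1(n^{v_H}p^{e_H}q)^k$ and using $Y\leq X$, the bad event $\{Y\geq 4\delta_1(n^{v_H}p^{e_H}q)^k\}$ is contained in $\{|Y-\mu|\geq \mu/2\}$, so Lemma~\ref{lem:kimvu} applies and yields
\[\mathbb{P}\bl Y\geq 4\delta_1(n^{v_H}p^{e_H}q)^k\br \leq 2e^2 N^{k-1}\exp\bl-c(\mu/\nu)^{1/(2k)}\br.\]
The crude degree bound $\deg_{\bm{H}_{G(n,p)}}(I)\leq N^{k-i}$, combined with $Nq\geq qn^{v_H}p^{e_H}>(\log n)^{3k}>1$, gives $\nu\leq (Nq)^{k-1}\leq (Cn^{v_H}p^{e_H}q)^{k-1}$. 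Establishing a matching lower bound $\mu/\nu\geq \Omega(n^{v_H}p^{e_H}q)$ then produces the second error term $\exp\bl-c(qn^{v_H}p^{e_H})^{1/(2k)}\br$, and the prefactor $N^{k-1}\leq n^{v_H(k-1)}$ is absorbed using $(qn^{v_H}p^{e_H})^{1/(2k)}>(\log n)^{3/2}\gg \log n$.

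The principal obstacle is exactly this lower bound on $\mu/\nu$ with the correct exponent on $q$. The direct route -- combining $\mu\geq 4\delta_1(n^{v_H}p^{e_H}q)^k\cdot q^k$ (forced by $Y\leq X$ and the bad event) with $\nu\leq(Cn^{v_H}p^{e_H}q)^{k-1}$ -- loses an extra factor of $q^k$. Two remedies suggest themselves: either partition the range $[4\delta_1(n^{v_H}p^{e_H}q)^k,\,2\delta_1(n^{v_H}p^{e_H})^k]$ of $X$ into dyadic bands and apply Lemma~\ref{lem:kimvu} separately in each, exploiting that the small-$X$ bands have low probability; or complement the upper-tail control on $X$ with the Janson lower tail from Lemma~\ref{lem:vanillajanson} (applied to the family of $\mathcal K$-spanning $\bigsqcup^k H$, which has size $\Omega(n^{kv_H})$ in the non-trivial regime $|\mathcal K|=\Omega(n^k)$; the complementary regime is handled directly from $Y\leq X$) to concentrate $X$ near $\mathbb{E}[X]=\Theta(\delta_1(n^{v_H}p^{e_H})^k)$, after which $\mu=Xq^k$ is of the right order and the ratio falls out. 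With this concentration in hand, the remaining steps -- the first-moment computation, the two applications of Lemma~\ref{lem:upper tail lemma}, and absorbing the polynomial prefactor -- are routine.
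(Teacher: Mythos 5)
Your proposal is correct and takes essentially the same approach as the paper: a two-stage decoupling of the randomness in $G(n,p)$ and $\rho(H;q)$, upper-tail control of $X$ via Lemma~\ref{lem:upper tail lemma}, Kim--Vu conditioned on the outcome of $G(n,p)$, and, crucially, your remedy (b) --- applying Janson's inequality (Lemma~\ref{lem:vanillajanson}) to the $\cK$-spanning copies to get a matching \emph{lower} bound on $X$, so that $\mu = Xq^k = \Theta((n^{v_H}p^{e_H}q)^k)$ and $\mu/\nu = \Omega(n^{v_H}p^{e_H}q)$ --- is exactly what the paper does. One small inaccuracy: the regime $|\cK| = o(n^k)$ is \emph{not} handled by $Y \le X$ alone, since the upper bound on $X$ from Lemma~\ref{lem:upper tail lemma} does not carry the $q^k$ factor that appears in the threshold; the correct (and simpler) observation, which the paper uses, is that $Y$ is monotone in $\cK$, so one may WLOG take $|\cK| = \delta n^k$, where the Janson lower tail applies.
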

 \begin{proof}
 It suffices to prove the lemma in the case when $|\cK|=\delta n^k$.
 For this, we split the analysis of $\binom{G(n,p)}{H}_q$ into looking at the random edges given by $G(n,p)$ and the random function $\rho(H;q):\binom{[n]}{H}\rightarrow\{0,1\}$ separately. Firstly consider the $\cK$-spanning copies of $\bigsqcup^kH$ in the complete graph $K_n$. There are at most $\delta _1 n^{kv_H}$ such copies  and each appears with probability $p^{ke_H}$. Thus, in expectation, the number of $\cK$-spanning copies of $\bigsqcup^kH$ in $G(n,p)$ is at most $\delta _1(n^{v_H}p^{e_H})^k$. Moreover, Lemma~\ref{lem:upper tail lemma} tells us that the count of such copies in $G(n,p)$ is at most twice this with probability at least $1-\exp\left(-c_{\ref{lem:upper tail lemma}}\Phi^{\frac{1}{ke_H}}\right),$ where $c_{\ref{lem:upper tail lemma}}=c(H,\delta)$ as given by Lemma~\ref{lem:upper tail lemma} and $\Phi=\Phi\left(\bigsqcup^kH,p\right)$. Now note that 
 \begin{align*}
     \Phi\left(\bigsqcup\nolimits^kH,p\right)&=\min\left\{\prod_{i\in[k]}n^{v_{J_i}}p^{e_{J_i}}:J=\bigsqcup_{i\in[k]}J_i\subseteq \bigsqcup\nolimits^kH, e_J>0 \right\} \\
     &\geq \min\left\{n^{v_{J_j}}p^{e_{J_j}}:J=\bigsqcup_{i\in[k]}J_i\subseteq \bigsqcup\nolimits^kH, e_{J_j}>0\right\}
     \\ &\geq \Phi(H,p),
 \end{align*}
 where we split subgraphs $J\subseteq \bigsqcup^kH$ according to their subgraphs $J_i$ in the $i^{th}$ copy of $H$ in $\bigsqcup^kH$ and in the second step we single out a $j=j(J)$ such that $J_j\subseteq J$ has a nonempty edge set. 
 
 Applying Lemma \ref{lem:upper tail lemma} also to the counts of $\bigsqcup^{k'}H$, for smaller values of $k'$, we can conclude that there exists a $c'>0$ so that,
 with probability at least 
 \[1-\exp\left(-c'\Phi(H,p)^{\frac{1}{ke_H}}\right),\] 
 there are at most $2\delta_1(n^{v_H}p^{e_H})^k$ $\cK$-spanning copies of $\bigsqcup^kH$ in $G(n,p)$ and there are at most $2(n^{v_H}p^{e_H})^{k'}$ copies of $\bigsqcup^{k'}H$ in $G(n,p)$ for all $1\leq k'\leq k-1$. 
 On the other hand there are at least $\delta n^{kv_H}/(2kv_H)!$ $\cK$-spanning copies of $\bigsqcup^kH$
 in $K_n$. So by Lemma~\ref{lem:vanillajanson} there is a $c''>0$ such that with probability at least
 \[1-\exp\left(-c''\Phi(H,p)\right),\] 
 there are at least $2\delta _2 (n^{v_H}p^{e_H})^k$ $\cK$-spanning copies of $\bigsqcup^kH$
 in $G(n,p)$ where $\delta _2:= \delta /4(2kv_H)!.$

 Now we condition on all these events occurring in $G(n,p)$ and turn to analyse the effect of $\rho(H,q)$. We know that the probability that each copy of $H$ in a $\cK$-spanning copy of $\bigsqcup^k H$ is selected in $\binom{G(n,p)}{H}_q$ with probability $q^k$, and we will obtain concentration via a simple application of Lemma~\ref{lem:kimvu}. Indeed, consider the auxiliary $k$-uniform hypergraph $\bm{H}$ whose vertex set is given by copies of $H$ in $G(n,p)$ and whose edge set is given by copies of $H$ which comprise a $\cK$-spanning copy of $\bigsqcup^kH$.  From above we have that $\bm{H}$ has at most $2n^{v_H}p^{e_H}$ vertices (the copies of $H$ in $G(n,p)$) and 
 between $2\delta_2(n^{v_H}p^{e_H})^{k}$ and
 $2\delta_1(n^{v_H}p^{e_H})^{k}$ edges. We also know, from the concentration on the number of copies of $\bigsqcup^{k'}H$ in $G(n,p)$, that for any set $I$ of $i$ copies of $H$ with $1\leq i \leq k$, the number of edges of $\bm{H}$ containing $I$ is at most $2(n^{v_H}p^{e_H})^{k-i}$. Thus, Lemma \ref{lem:kimvu} tells us that conditioning on the outcome of $G(n,p)$ as above, with probability at least \[1-2e^2\left(2n^{v_H}p^{e_H}\right)^{k-1}\exp\left(-c_{\ref{lem:kimvu}}\left(\delta _2 n^{v_H}p^{e_H}q\right)^{\frac{1}{2k}}\right),\]
 the number of $\cK$-spanning copies of $\bigsqcup^kH$ whose copies of $H$ are all selected in $\binom{G(n,p)}{H}_q$ is at most $4\delta_1(n^{v_H}p^{e_H}q)^k$, where $c_{\ref{lem:kimvu}}$ is the constant given by Lemma \ref{lem:kimvu}. The conclusion 
 then follows from a simple calculation on the error probability that either the counts in $G(n,p)$ are not as desired or the count in $\binom{G(n,p)}{H}_q$ is too high, given that we get the desired counts in $G(n,p)$.
 \end{proof}
 
We now turn to proving Theorem \ref{thm:robustKreuter1}.

\begin{proof}[Proof of Theorem \ref{thm:robustKreuter1}]
It suffices to prove the theorem in the case when $p=Cn^{-1/m_K (F,H)}$ for some
sufficiently large $C>0$. 
  We begin with a calculation. Let 

 \begin{equation} \label{eq:ell} \ell:= \min_{J\subseteq H,e_J>0} \left(v_J-\frac{e_J}{m_K(F,H)}\right),  \end{equation} so that $Cn^\ell\leq\Phi_{H,p}\leq C^{e_H}n^{\ell}$. 
Letting $c_1:=m_1(F)/m_K(F,H)>0$, we have that $c_1 \leq \ell \leq 1$. Indeed let $J$ be the minimising subgraph of $H$ in the definition of $\ell$ and let $J'$ be the maximising subgraph of $H$ in the definition of $m_K(F,H)$. The lower bound on $\ell$ then follows from the fact that 
\begin{equation}\label{eq:densities}v_Jm_K(F,H)-e_J\geq v_J\left(\frac{m_1(F)+e_J}{v_J}\right)-e_J=m_1(F). \end{equation}
The upper bound on $\ell$ follows because 
\begin{align}
    m_1(F)\leq m_K(F,H)=\frac{m_1(F) + e_{J'}}{v_{J'}} 
    &\Longrightarrow \frac{m_1(F)}{m_1(F)+e_{J'}}\leq \frac{1}{v_{J'}}, 
\end{align}
and hence
\begin{align*}
    \ell &\leq v_{J'}-\frac{e_{J'}}{m_K(F,H)} 
     = v_{J'}\left(1-\frac{e_{J'}}{m_1(F)+e_{J'}}\right) 
    =v_{J'}\left(\frac{m_1(F)}{m_1(F)+e_{J'}}\right) 
    \leq 1.
\end{align*}

Now we turn to the proof of the theorem. We first fix constants.  We fix $\delta_0>0$ such
that 
\begin{align}\label{newnewnew}
  \delta_0<\frac{1}{32\cdot v_F! (4(2v_H)! v_Fv_H)^{v_F} } 
\end{align}
and $c>0$ such that $c<\frac{c_1}{4v_Fe_Fe_H}$.
Further, for each $0\leq i\leq t$ we fix $\eta_i:=|U_i|/n$ so that $\eta_i\geq \eta_0$ for
all $i$. Further, fix 
\[\gamma_i:=\frac{(1/(v_H!)-\delta)^2(\eta _0\eta_i)^{v_H}}{16C_{\ref{lem:overlapping copies}}}\]
for all $0\leq i\leq t$, where  $C_{\ref{lem:overlapping copies}}=C_{\ref{lem:overlapping copies}}(H)$ is the constant obtained from Lemma \ref{lem:overlapping copies}. 
By considering a large enough constant $C$, we expose $G(n,p)$ in two rounds so that $G(n,p)=G_1(n,p_1)\cup G_2(n,p_2)$, with $p_1,p_2\geq C'n^{-1/m_K(F,H)}$ for $C'$  such that $C'>\frac{2\log v_H}{\gamma_0^{v_F}}$.
 Let us briefly sketch the proof which splits into proving two main claims. The first claim states that with high probability in $G_1$, for each $i\in [t]$, there is a (large) subfamily $\pzc{D}_i\subset\pzc{H}_i$ of pairwise vertex-disjoint copies of $H$, all of whose edges appear in $G_1$. We define 
\begin{equation} \label{eq:transversals} \cW_i:= \{W\subset U_i: |W\cap T|=1 \mbox{ for all } T\in \pzc{D}_i\} \end{equation} to be the sets which can be obtained by choosing one vertex from each copy of $H$ in $\pzc{D}_i$.  The second claim is that with high probability in $G_2$, for each $i$ and each set $W\in \cW_i$, there is a copy of $F$ which lies in $\binom{W}{F}\cap \pzc{F}_i$ whose edges appear in $G_2$.  The proof then follows easily from these two claims. Indeed, consider a red/blue colouring of $G(n,p)=G_1\cup G_2$ and some $i\in [t]$. If there is no blue copy of $H$ in $\pzc{H}_i$ then in particular, every copy of $H$ in $\pzc{D}_i$ must contain a red vertex. By choosing one red vertex in each copy $T$ of $H$ in $\pzc{D}_i$, we get a set $W\in \cW_i$ which is entirely red. The second claim then tells us that this set hosts a copy of $F$ which lies in $\pzc{F}_i$ and so we are done. It remains to prove the two claims above.

\smallskip

In order to prove the first claim, it will be useful to consider only the selected copies $\binom{G_1(n,p_1)}{H}_q$ as introduced in Section~\ref{sec:janson}. As these copies all appear in $G_1(n,p_1)$, it will suffice to find a suitable family $\pzc{D}_i \subseteq \binom{G_1(n,p_1)}{H}_q$. So we fix \[
q:=\frac{\eta _0^{v_H}(1/(v_H!)-\delta)}{4C_{\ref{lem:overlapping copies}}}\left(\frac{n^\ell}{n^{v_H}{p_1}^{e_H}}\right)=\sqrt{\frac{\gamma_0}{C_{\ref{lem:overlapping copies}}}}\left(\frac{n^\ell}{n^{v_H}{p_1}^{e_H}}\right).\]  
Now we apply Lemma \ref{lem:overlapping copies}, observing that $\Phi'\geq n^\ell$ due to our calculation at the beginning of this proof ($\Phi'=\Phi_{H,p_1}\geq n^\ell$ if $\ell<1$ and $\Phi'=n^\ell=n$ if $\ell=1$). 
As the expected number of copies of $H$ in $\binom{G_1(n,p_1)}{H}_q$ is $\Omega (qn^{v_H}{p_1}^{e_H}) =\Omega(n^\ell)$, we have that with high probability (with probability at least $1-O(n^{-\ell})$) there are at most \[\frac{C_{\ref{lem:overlapping copies}}q^2n^{2v_H}{p_1}^{2e_H}}{n^\ell}=\gamma_0n^\ell\] overlapping copies of $H$ in $\binom{G_1(n,p_1)}{H}_q$.  For a given $i\in [t]$, we can conclude from Lemma \ref{lem:janson} that with high probability there at least 
\[\frac{(1/(v_H!)-\delta)q|U_i|^{v_H}{p_1}^{e_H}}{2}\geq 2\gamma_in^\ell\]
copies of $H$ in $\pzc{H}_i \cap \binom{G_1(n,p_1)}{H}_q$. As this holds with probability at least $1-\exp(-n^{c_1})$, we have that this holds for all $i \in [t]$ with high probability. Thus we obtain a family $\pzc{D}_i\subset \pzc{H}_i$ of  vertex-disjoint copies of $H$ by taking the copies in $\pzc{H}_i \cap \binom{G_1(n,p_1)}{H}_q$  and deleting one copy from any pair of overlapping copies. Our calculations above guarantee that with high probability, for every $i\in [t]$, $\pzc{D}_i$ has  size at least 
$\tilde{n}_i:=\gamma_in^\ell$, and we restrict each family to one of size exactly $\tilde{n}_i$.  

\smallskip

We now turn to the second exposure, namely $G_2=G_2(n,p_2)$, and look to prove that for every $i\in [t]$ and every set $W\in \cW_i$ there is a copy of $F$ in $\binom{W}{F}\cap \pzc{F}_i$ which appears in $G_2$, where $\cW_i$ is as defined in (\ref{eq:transversals}). Fixing an $i\in[t]$ and a $W\in \cW_i$, we consider $G_2$ restricted to $W$. We look to apply Lemma  \ref{lem:vanillajanson} and so need a lower bound on the parameter \[\tilde{\Phi}_i:=\Phi_{F,p_2}=\min_{I\subseteq F, e_I>0}\tilde{n}_i^{v_I}p_2^{e_I},\] which is calculated with respect to the vertex set $W$. As at the beginning of the proof, we  set $J\subset H$ to be the minimising subgraph in the definition of $\ell$ (\ref{eq:ell}) and  use that for all $I\subseteq F$ with $e_I>0$, we have that 
\[\ell m_K(F,H)=v_Jm_K(F,H)-e_J\geq m_1(F)\geq \frac{e_I}{v_I-1},\]
by (\ref{eq:densities}). Rearranging, we obtain that 
\[v_I-\frac{e_I}{\ell m_{K(F,H)}}\geq 1,\]
and so 
\[\min_{I\subseteq F, e_I>0} \left( \ell v_I-\frac{e_I}{m_K(F,H)} \right) \geq \ell. \]
We conclude that $\tilde{\Phi}_i\geq \tilde{\Phi}_0 \geq \gamma_0^{v_F}C'n^{\ell}$. As  $t \leq \exp(n^c)$ and for each $i$, and $|\cW_i|= (v_H)^{\tilde{n}_i} \leq \exp(n^\ell\log v_H )$, we can take a union bound and conclude from Lemma \ref{lem:vanillajanson} that for all choices  of $i\in[t]$ and $W\in \cW_i$, we have that there are at least $\frac{\tilde{n}_i^{v_F}{p_2}^{e_F}}{2}$ copies of $F$ on $W$ in $G_2$ with high probability. Note here that we used that $C'>\frac{2\log v_H}{\gamma_0^{v_F}}$. It remains to prove that for each $i\in[t]$ and $W$ one of these copies of $F$ belongs to $\pzc{F}_i$. 

To this end we define $\pzc{B}_i:=\binom{U_i}{F}\setminus \pzc{F}_i$ to be the copies of $F$ which do not lie in our desired collection. We will upper bound the number of copies $S$ of $F$ in $\pzc{B}_i$ which appear in $G_2$, such that each vertex of $S$ lies in a different copy of $H$ in $\pzc{D}_i$. In order to do this, we return to analyse our construction of $\pzc{D}_i$ and in particular our use of $\binom{G_1(n,p_1)}{H}_q$. Let $\cK_i$ be the collection of $v_F$-sets in $U_i$ which host a copy of $F$ in $\pzc{B_i}$ and note that $|\cK_i|\leq \delta |U_i|^{v_F}$. We say a set $K\in \cK_i$ is \emph{dangerous} if each vertex of $K$ is contained in a distinct copy of $H$ in $\pzc{D}_i$. In order to be dangerous, a set $K$ has to lie in a transversal of a copy of $\bigsqcup^{v_F}H$, composed of copies of $H$ in $\binom{G_1(n,p_1)[U_i]}{H}_q$ (see the paragraph before Lemma~\ref{lem:dangersets} for the relevant definitions). Therefore in order to upper bound the number of dangerous sets, it suffices to upper bound the number of $\cK_i$-spanning copies of $\bigsqcup^{v_F}H$ in $\binom{G_1(n,p_1)[U_i]}{H}_q$. It follows then from Lemma \ref{lem:dangersets} that for all $i\in [t]$, there are at most 
\[4\delta (v_Fv_H)^{v_F} \left(|U_i|^{v_H}{p_1}^{e_H}q\right)^{v_F}=
\frac{(v_Fv_H)^{v_F} \cdot2^{2v_F+2}\delta}{(1/(v_H!)-\delta)^{v_F}} \tilde{n}_i^{v_F}\stackrel{(\ref{newnewnew})}{\leq }\frac{\tilde{n}_i^{v_F}}{8v_F!} \]
dangerous sets with high probability, using that $\delta<\delta_0$. As for a fixed $i\in[t]$, this holds with probability $1-\exp\left(-\Omega\left(n^{\frac{\ell}{2v_Fe_H}}\right)\right)$ and $t\leq \exp(n^c)$, we can conclude that there at most $\tilde{n}_i^{v_F}/(8v_F!)$ dangerous sets for each $i\in[t]$ with high probability.

Finally, we calculate how many copies of $F$ in $G_2$ are hosted on dangerous sets. For a fixed $i$, we consider $G_2$ restricted to the vertex set $D_i:=\cup_{T\in \pzc{D}_i}V(T)$. We have that $|D_i|=v_H\tilde{n}_i$ and from the previous paragraph we may assume that there are at most $\tilde{n}_i^{v_F}/8$ potential copies of $F$ on dangerous sets in $D_i$. 
Each of these appears with probability $p_2^{e_F}$ and by Lemma \ref{lem:upper tail lemma}  we have that with probability at least $1-\exp\left(-\Omega\left(n^{\frac{\ell}{e_F}}\right)\right)$, there are at most $\tilde{n}_i^{v_F}p_2^{e_F}/4$ copies of $F$ in $G_2$ which are hosted on dangerous sets. 
The failure probability here follows from a calculation of the appropriate $\Phi_{F,p_2}$ similar to the calculation of $\tilde{\Phi}$ above. 
Thus we can take a union bound to conclude that for all $i\in[t]$, there are at most $\tilde{n}_i^{v_F}p_2^{e_F}/4$ copies of $F\in \cB_i$ which lie in $D_i$, whose edges appear in  $G_2$ and whose vertices are contained in distinct copies of $H$ in $\pzc{D}_i$. Thus with high probability, for all $i\in[t]$ and for all $W\in \cW_i$, there is a copy  $T\in\binom{W}{F}\cap \pzc{F}_i$ of $F$, whose edges appear in $G_2$, as required. 
\end{proof}

\section{Concluding remarks}\label{sec:conc}

In this paper, we have determined, at essentially every density $d$, the perturbed vertex Ramsey threshold $p(n; K_r, H, d)$ for cliques versus arbitrary graphs.  One could investigate how these thresholds change with the introduction of additional colours, but the most pressing problem that remains open is to extend our results to all pairs of graphs $(F,H)$, with the symmetric case $F = H$ of particular interest.  Our methods do provide lower and upper bounds on the threshold in the general case, which we discuss below.

\smallskip

We start with the $1$-statement, where we wish to know what $p$ ensures $G_n \cup G(n,p)$ is $(F,H)_v$-Ramsey when $G_n$ is a graph of density more than $1 - 1/(k-1)$.  Recall that in our algorithmic proof of the $1$-statement in Theorem~\ref{thm:main}, we worked in an $\eps$-regular $k$-tuple in $G_n$, using the vertex Ramsey properties of the random graph in each part to iteratively grow a red clique or try to build a copy of $H$.

In the general setting, when we seek a red copy of $F$ instead, we can adopt the same approach.  The main difference is that there are many ways we could try to build $F$ over the $k$ parts.  To keep track of these, we define a \emph{partial partition of $F$} to be a partition of the vertices $V(F) = U_1 \cup \hdots \cup U_k \cup W$, where $W \neq \emptyset$.  This represents the stage in the algorithm where we have found red subgraphs $F[U_i]$ in the parts $V_i$, and $W$ represents the vertices of $F$ that are still missing.  Thus, when we try to extend this red subgraph, we will require $G(n,p)[V_i]$ to be $(F[U_i \cup \{u_i\}], H_i)_v$-Ramsey for some optimal choice of $u_i \in W$ and partition $H = H_1 \cup \hdots \cup H_k$.  In this way, we either get one vertex closer to having a red copy of $F$, or we find one of the parts we need for a blue copy of $H$.  Our proof then shows that, if we write $p(n; F, H, d) =: n^{-1/m^*(F, H; k)}$, we have
\begin{equation} \label{eqn:gen1stata}
m^*(F, H; k) \le \max\limits_{\substack{V(F) = U_1 \cup \hdots \cup U_k \cup W;\\W \neq \emptyset}} \min\limits_{\substack{H = H_1 \cup \hdots \cup H_k; \\ u_1, \hdots, u_k \in W}} \max\limits_{i : H_i \neq \emptyset} \beta(F[U_i \cup \{u_i\}], H_i).
\end{equation}
Note that when $F = K_r$, all that matters is the size $\card{U_i}$ and not the set $U_i$ itself, since each induced subgraph of $K_r$ is itself a clique.  Hence we recover the bound of Theorem~\ref{thm:main}.

Unfortunately, this upper bound need not be tight.  For instance, when $F$ and $H$ are complete bipartite graphs, it is not hard to see that $m^*(F, H; 4) = 0$.  However, by considering sets $U_i$ that, for each $i$, span both colour classes of $F$, we can ensure that each subgraph $F[U_i]$ has edges, which results in the right-hand side of~\eqref{eqn:gen1stata} being positive.

One issue with~\eqref{eqn:gen1stata} is that it considers \emph{all} partial partitions of $F$, but we need only maximise over those that could feasibly arise in the algorithm.  While it may not be easy to describe these partitions explicitly, we can construct the family of feasible partial partitions recursively.

To do so formally, we define the \emph{extension function} $f$ which, given the sets $U_1, \hdots, U_k$ of a partial partition of $F$, returns the vertices $(u_1, \hdots, u_k) \in W^k$ that are used to extend the red subgraph.  For each such extension function, we can build the family $\mc F(f)$ of feasible partitions in the following way.  We start with $(\emptyset, \hdots, \emptyset) \in \mc F(f)$.  Then, for each $i \in [k]$, we add $(U_1, \hdots, U_{i-1}, U_i \cup \{f(\vec{U})_i\}, U_{i+1}, \hdots, U_k)$ to $\mc F(f)$, provided this is still a partial (and not complete) partition of $F$.  Note that this represents the larger red subgraph we would obtain if, in $G(n,p)[V_i]$, we would find a monochromatic red subgraph when applying the $(F[U_i \cup \{f(\vec{U})_i \}], H_i)_v$-Ramsey property.

We then need only maximise over the feasible partitions $\mc F(f)$, and can choose the extension function $f$ that gives the lowest possible threshold.  That is, we have the upper bound
\begin{equation} \label{eqn:gen1statb}
m^*(F,H;k) \le \min\limits_{f} \max\limits_{(U_1, \hdots, U_k) \in \mc{F}(f);} \min\limits_{H = H_1 \cup \hdots \cup H_k;} \max\limits_{i: H_i \neq \emptyset} \beta(F[U_i \cup \{f(\vec{U})_i\}], H_i).
\end{equation}
Note again that in the case $F = K_r$, the choice of extension function $f$ is irrelevant, since all that matters are the sizes $\card{U_i}$.

This is strictly better than~\eqref{eqn:gen1stata}, as one can find an extension function $f$ that shows $m^*(F,H;4) = 0$ whenever $F$ and $H$ are bipartite.  Unfortunately, even~\eqref{eqn:gen1statb} need not be tight, as we should also have $m^*(F,H; 3) = 0$ for such $F$ and $H$, but the right-hand size is positive when we only have three parts. It would therefore be very interesting to find a sharper bound for the $1$-statement.  A useful step in that direction could be to characterise which extension functions $f$ are optimal for a given graph $F$.

\smallskip

In the other direction, we can provide lower bounds on $m^*(F, H;k)$ by generalising the colouring we gave in proving the $0$-statement of Theorem~\ref{thm:main}. We shall once again take $G_n$ to be a complete $k$-partite graph, and will describe how one can colour the vertices of the random graphs $G(n,p)[V_i]$ to avoid both a red $F$ and a blue $H$ in $G_n \cup G(n,p)$.

To this end, we call a $k$-tuple $(\mc F_1, \hdots, \mc F_k)$ of families of nonempty graphs a \emph{$k$-cover} of $F$ if, for any $k$-partition $V(F) = U_1 \cup \hdots \cup U_k$ of the vertices of $F$, there is some $i \in [k]$ such that $F[U_i] \in \mc F_i$. That is, a $k$-cover is a collection of induced subgraphs that are bound to appear in any $k$-partition of $F$.

Given this definition, we have the following lower bound.
\begin{equation} \label{eqn:gen0stat}
m^*(F,H;k) \ge \max\limits_{(\mc F_1, \hdots, \mc F_k) \text{ $k$-cover of }F;} \min\limits_{H = H_1 \cup \hdots \cup H_k;} \max\limits_{i: H_i \neq \emptyset;} \min\limits_{F' \in \mc F_i} \beta(F',H_i).
\end{equation}
When $F = K_r$, this recovers the bound from Theorem~\ref{thm:main}, since we have $k$-covers  of the form $\mc F_i = \{ K_{r_i + 1} , \dots, K_r\}$, where $\sum_i r_i = r-1$.

To describe the colouring in the general case, fix a maximising $k$-cover $(\mc F_1, \hdots, \mc F_k)$, let $\beta^*$ be the right-hand side of~\eqref{eqn:gen0stat}, and let $p = o(n^{-1/\beta^*})$. For each $i \in [k]$, we define $\mc H_i = \{H' \subseteq H : \forall F' \in \mc F_i, \, \beta(F', H') \ge \beta^* \}$. As before, one can argue that $H \in \mc H_i$, and so these families are all nonempty. Applying Proposition~\ref{prop:0statementforfams}, we can colour the vertices of $G(n,p)[V_i]$ so as to avoid any red graph from $\mc F_i$ and any blue graph from $\mc H_i$.

It is now tautological that this colouring of $G_n \cup G(n,p)$ has neither a red $F$ nor a blue $H$. Suppose for contradiction there is a red copy of $F$, partitioned as $F = F_1 \cup \hdots \cup F_k$. Since $(\mc F_1, \hdots, \mc F_k)$ is a $k$-cover of $F$, there is some $i$ with $F_i \in \mc F_i$, but then there is no red $F_i$ in $G(n,p)[V_i]$. On the other hand, if there is a blue $H$, partitioned as $H = H_1 \cup \hdots \cup H_k$, then we must have some $i \in [k]$ such that $H_i \neq \emptyset$ and $\beta(F', H_i) \ge \beta^*$ for all $F' \in \mc F_i$. But then $H_i \in \mc H_i$, and so there is no blue $H_i$ in $G(n,p)[V_i]$ either.

The challenge arises from the fact that when $F$ is not a clique, there could be many ways to partition it into $k$ induced subgraphs, and so there will be a wide variety of complicated $k$-covers. This makes it hard to analyse~\eqref{eqn:gen0stat}, and in particular to compare it to the upper bound of~\eqref{eqn:gen1statb}. Indeed, it is not obvious at first sight that the right-hand side of~\eqref{eqn:gen1statb} is at least that of~\eqref{eqn:gen0stat}. To close the gap between the bounds, it would help to better understand the $k$-covers of a graph $F$, and to see if there are different colourings of $G_n \cup G(n,p)$ that show~\eqref{eqn:gen0stat} is not tight.
\section{Acknowledgement}
The authors are grateful to the referees for their careful and helpful reviews.

\bibliography{Biblio}

\end{document}